\newcommand{\cit}[1]{{\rm \textbf{#1}}}
\newcommand{\Ref}[2]{\cit{%
\ifthenelse{\equal{#1}{thm}}{Theorem}{}%
\ifthenelse{\equal{#1}{prop}}{Proposition}{}%
\ifthenelse{\equal{#1}{lem}}{Lemma}{}%
\ifthenelse{\equal{#1}{cor}}{Corollary}{}%
\ifthenelse{\equal{#1}{defn}}{Definition}{}%
\ifthenelse{\equal{#1}{oss}}{Remark}{}%
\ifthenelse{\equal{#1}{rmk}}{Remark}{}%
\ifthenelse{\equal{#1}{sec}}{Section}{}%
\ifthenelse{\equal{#1}{ex}}{Example}{}%
\ifthenelse{\equal{#1}{conj}}{Conjecture}{}%
\ifthenelse{\equal{#1}{ssec}}{Subsection}{}%
\ifthenelse{\equal{#1}{tab}}{Table}{}%
\ifthenelse{\equal{#1}{cla}}{Claim}{}%
\  \ref{#1:#2}%
}}
\newtheorem{prop}{Proposition}[section]
\newtheorem*{prop*}{Proposition}
\newtheorem{thm}[prop]{Theorem}
\newtheorem*{thm*}{Theorem}
\newtheorem{lem}[prop]{Lemma} 
\newtheorem*{cor*}{Corollary}
\theoremstyle{definition}
\theoremstyle{remark}
\newtheorem{oss}[prop]{Remark}
\newtheorem{rmk}[prop]{Remark}
\numberwithin{equation}{section}
\newcommand{\hk}{hyperk\"{a}hler }
\newcommand{\kahl}{K\"{a}hler }
\newcommand{\kntiposp}{$K3^{[n]}$-type }
\newcommand{\issp}{irreducible symplectic }
\DeclareMathOperator{\Ext}{Ext}
\DeclareMathOperator{\ch}{ch}
\newcommand{\id}{\mathrm{id}}
\newcommand{\mc}[1]{\mathcal{#1}}
\newcommand{\Z}{\mathbb{Z}}
\newcommand{\C}{\mathbb{C}}
\newcommand{\PP}{\mathbb{P}}
\DeclareMathOperator{\td}{td}
\newcommand{\aut}{\mathrm{Aut}}
\begin{document}

\title{Automorphisms of O'Grady's Manifolds Acting Trivially on Cohomology}
\author{Giovanni Mongardi}
\email{giovanni.mongardi@unimi.it}
\address{Department of Mathematics, University of Milan\\ via Cesare Saldini 50, Milan}
\author{Malte Wandel}
\email{wandel@kurims.kyoto-u.ac.jp}
\address{Research Institute for Mathematical Sciences,
Kyoto University, Kitashirakawa-Oiwakecho, Sakyo-ku, Kyoto, 606-8502 Japan}
%
\classification{Primary: 14J50 secondary: 14D06, 14F05 and 14K30}
\keywords{irreducible symplectic manifolds, automorphisms, moduli spaces of stable objects}
\thanks{The first named author was supported by FIRB 2012 ``Spazi di Moduli e applicazioni''.\\
The second named author was supported by JSPS Grant-in-Aid for Scientific Research (S)25220701 and by the DFG research training group GRK 1463 (Analysis, Geometry and String Theory).}

\begin{abstract}
We determine the subgroup of automorphisms acting trivially on the second integral cohomology for  \hk manifolds which are deformation equivalent to O'Grady's sporadic examples. In particular, we prove that this subgroup is trivial in the ten-dimensional case and isomorphic to $(\Z/2\Z)^{\times 8}$ in the six-dimensional case.
\end{abstract}

\maketitle
\addcontentsline{toc}{section}{Abstract}
\vspace*{6pt}
\section*{Introduction}\label{sec:introduction}
\addcontentsline{toc}{section}{Introduction}
Automorphisms of \issp or \hk manifolds have recently been studied by numerous mathematicians pursuing varying objectives and using different techniques. A key to the understanding of the underlying geometry is usually played by understanding the induced action of an automorphism on the second integral cohomology: For any \issp manifold $X$ the cohomology group $H^2(X,\Z)$ carries a natural non-degenerate lattice structure and a weight-two Hodge structure. An automorphism of $X$ preserves both these structures and we obtain a homomorphism of groups:
\[\nu\colon \aut(X)\rightarrow O(H^2(X,\Z)).\]
It is this homomorphism that allows us to study the geometry of $X$ and its automorphisms using lattice theory. This has been done very successfully and extensively in the case of K3 surfaces. From the strong Torelli theorem for K3 surfaces it follows that $\nu$ is injective in this case. Thus, by passing from the geometric picture on to the lattice side, we do not lose any information and we can classify automorphisms using lattice theory.

When constructing the first examples of higher-dimensional \issp manifolds, Beauville (cf.\ \cite{Beau83}[Prop.\ 10]) soon realised that the injectivity of $\nu$ holds also true in the case of Hilbert schemes of points on a K3. This fact was applied by Boissi\`{e}re$-$Sarti (\cite{BS12}) to understand when an automorphism on such a Hilbert scheme is induced by an automorphism of the surface, constituting a first step in the classification of automorphisms of Hilbert schemes. These results should not lead to the idea that $\nu$ is injective in general. It was shown in \cite{BNS11} that for generalised Kummer varieties of dimension $2n-2$ the kernel of $\nu$ is generated by induced automorphisms coming from the underlying abelian surface, i.e. by translations by
points of order $n$ and by $-\id$. These automorphisms preserve the Albanese fibres of the Hilbert scheme of $n$ points on the surface and they surely act trivially on cohomology. Thus, in general, $\ker\nu$ is not trivial. But in the case of generalised Kummer varieties we at least understand the action of the kernel very well. Thus it is still possible to use lattice theory for an understanding of the automorphism group.

A fundamental step towards a better understanding of the kernel of $\nu$ is the result by Hassett$-$Tschinkel stating that - as a group - this kernel is a deformation invariant of the manifold $X$ (cf.\ \cite{HT13}[Thm.\ 2.1]). It implies that we now know the group structure of $\ker\nu$ for all manifolds of \kntiposp and of Kummer-type. Note, that for a general deformation of a generalised Kummer variety we do not have an explicit construction of the automorphisms in $\ker\nu$.

There are two more known deformation types of \hk manifolds. The first examples in both cases have been constructed by O'Grady (cf.\ \cite{OGr99} for the ten-dimensional example and \cite{OGr03} for the example of dimension six). The main results of this article concern the kernel of the cohomological representation $\nu$ for manifolds which are deformation equivalent to these manifolds.

In particular, we prove the following two theorems:
\begin{thm*}[(\Ref{thm}{thm_og10})]
Let $X$ be a manifold of $Og_{10}$-type. Then $\nu$ is injective.
\end{thm*}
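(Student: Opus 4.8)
The plan is to show that the kernel of $\nu$ is trivial for $Og_{10}$-type manifolds by combining the Hassett–Tschinkel deformation invariance with an explicit analysis of O'Grady's original ten-dimensional example. Since $\ker\nu$ is a deformation invariant (by \cite{HT13}), it suffices to compute it for a single well-chosen representative of the deformation class. The most natural candidate is O'Grady's original construction as a (symplectic) resolution of a moduli space of semistable sheaves on a K3 surface, or better, a moduli space of stable objects in a derived category for a generic stability condition, so that the space is smooth and the group structure is accessible.

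First I would recall that for any \hk manifold $X$, an element $f\in\ker\nu$ acts trivially on $H^2(X,\Z)$ and hence, by naturality, preserves the whole Hodge structure; in particular it fixes the symplectic form up to the identity, so $f$ is a \emph{symplectic} automorphism. The key point is then to exploit that a nontrivial symplectic automorphism acting trivially on $H^2$ must nonetheless act on the higher cohomology, and to derive a contradiction. Concretely, I would use that the fixed locus of such an automorphism, being the fixed locus of a symplectic involution-like map, is itself symplectic and of even codimension, and I would compute its contribution to the Lefschetz number or to the holomorphic Lefschetz fixed-point formula. For a finite-order $f$ with trivial action on $H^2$, the constraints coming from the Beauville–Bogomolov form together with the structure of the cohomology ring of $Og_{10}$-type manifolds should force the order to be one.

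The hard part will be controlling the action on the full cohomology ring from the triviality on $H^2$ alone. For \kntipo manifolds one has that $H^2$ generates a large part of $H^*$, but for $Og_{10}$-type manifolds the cohomology is not generated by $H^2$, so triviality on $H^2$ does not immediately propagate. The main obstacle is therefore to rule out a hypothetical automorphism acting trivially on $H^2$ but nontrivially on, say, $H^3$ or on the non-tautological part of $H^4$. My strategy here is to pass to the explicit moduli-theoretic model: on a moduli space of objects, an automorphism in $\ker\nu$ should be shown to be induced by an autoequivalence of the derived category of the underlying K3 (or an automorphism of the surface together with a twist), and then one uses the injectivity of the corresponding representation on the surface side, together with the fact that the dimension-ten O'Grady space admits no nontrivial such symplectic automorphism fixing $H^2$, to conclude. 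In effect I would reduce the statement to a lattice-theoretic and derived-categorical rigidity on the level of the Mukai lattice of the K3.

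Alternatively, and perhaps more cleanly, I would try a purely deformation-theoretic degeneration argument: specialise $X$ to the singular O'Grady model $\widetilde{M}\to M$ and analyse how an automorphism in $\ker\nu$ interacts with the symplectic resolution. Since the resolution is canonical, $f$ descends to the singular moduli space $M$, where it acts on a space whose cohomology is directly tied to the Mukai lattice of the K3; triviality on $H^2(X)$ then translates into triviality of the induced action on the Mukai lattice, and the injectivity of the action of Hodge isometries there yields $f=\id$. I expect the technical heart of the argument to lie precisely in this translation between the geometry of the resolution and the lattice data, and in verifying that no exotic automorphism hides in the exceptional divisor of the resolution.
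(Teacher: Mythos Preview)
Your proposal correctly opens with the Hassett--Tschinkel reduction to a single model, but none of the three strategies you sketch actually closes the argument, and the paper's proof uses a completely different idea that you do not mention.

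The Lefschetz/cohomology-ring route you yourself flag as blocked: since $H^2$ does not generate $H^*$ for $Og_{10}$-type, you cannot propagate triviality upward, and no fixed-point formula is worked out. The derived-category route has a genuine gap: you assert that an automorphism in $\ker\nu$ ``should be shown to be induced by an autoequivalence'' of the K3, but this is precisely the nontrivial step and you give no mechanism for it. Your third route (descend to the singular moduli space and use the Mukai lattice) contains a real error: knowing that $f$ acts trivially on $H^2(M)\cong (2v)^\perp$ tells you only that the induced Hodge isometry of the Mukai lattice is trivial --- it does \emph{not} imply $f=\id$ on $M$. The Mukai lattice governs $H^2$, not the full automorphism group of the moduli space; there is no ``injectivity of the action of Hodge isometries'' statement that gives what you need here.

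The paper's actual argument is entirely geometric and uses none of the above. It chooses as model the resolution of the relative compactified Jacobian $\mc{J}^4(|2H|)\to |2H|$ on a degree-two K3 whose branch sextic has a \emph{unique tritangent}. An automorphism $\psi\in\ker\nu$ descends to the singular Jacobian and, because it fixes the class of $\pi^*\mc{O}(1)$, sends fibres to fibres. Two substantial geometric inputs then finish the proof: (i) the relative Theta divisor $\Theta_{|2H|}$ is rigid (shown by exhibiting it as a $\PP^1$-fibration and invoking \Ref{lem}{lem_rigid}), so $\psi$ must preserve it and cannot act by translation on the fibres; (ii) the moduli map $|2H|\dashrightarrow\overline{\mc{M}_5}$ is injective for this specially chosen K3 (\Ref{prop}{injective_curves}), so the Torelli theorem for Jacobians forces $\psi$ to act trivially on the base and then, fibre by fibre, to be the identity. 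Your proposal is missing both of these key ingredients --- the Lagrangian fibration model, the rigidity of the Theta divisor, and the curve-Torelli step --- and without them the argument does not go through.
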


\begin{thm*}[(\Ref{thm}{thm_og6})]
Let $X$ be a manifold of $Og_6$-type. Then
\[\ker\nu\cong(\Z/2\Z)^{\times 8}.\]
\end{thm*}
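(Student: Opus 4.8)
The plan is to exploit the deformation invariance of $\ker\nu$ established by Hassett--Tschinkel: since this group depends only on the deformation type, it suffices to compute it for a single conveniently chosen manifold of $Og_6$-type. The natural choice is a symplectic resolution $\pi\colon X\to M$ of O'Grady's original model, where $M$ is the (singular) moduli space of sheaves (or more generally of Bridgeland-stable objects) on an abelian surface $A$ with Mukai vector $v=(2,0,-2)$, restricted to the fibre over $0$ of the Albanese map $M\to A\times\hat A$. For $A$ generic we may assume $\Pic(A)$ is as small as possible, so that the Hodge-theoretic constraints are strongest.

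For the lower bound I would write down an explicit copy of $(\Z/2\Z)^{\times 8}$ inside $\ker\nu$. The connected group $A\times\hat A$ acts on the full moduli space $M$ by $(a,L)\colon F\mapsto t_a^\ast F\otimes L$, covering on the Albanese $A\times\hat A$ the isogeny given on each factor by multiplication by the rank. The subgroup preserving each Albanese fibre is precisely the kernel of this isogeny, namely $A[2]\times\hat A[2]\cong(\Z/2\Z)^{\times 4}\oplus(\Z/2\Z)^{\times 4}=(\Z/2\Z)^{\times 8}$. These automorphisms lift to the resolution $X$ because they preserve the singular locus of $M$; and, being induced by the action of the connected group $A\times\hat A$, they act trivially on $H^\ast(X,\Z)$ and hence lie in $\ker\nu$. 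One must check that the resulting action on the central fibre is faithful, which holds for generic $A$.

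The heart of the proof is the reverse inclusion: every $f\in\ker\nu$ is one of these torsion automorphisms. An element of $\ker\nu$ fixes the class of the exceptional divisor of $\pi$, so it descends to an automorphism $\bar f$ of $M$ preserving its singular locus, and it preserves the Albanese fibre. I would then invoke the description of automorphisms of moduli spaces of objects on abelian surfaces in terms of Fourier--Mukai autoequivalences of $D^b(A)$: any such $\bar f$ is induced by an autoequivalence $\Phi$ together with a translation. The condition $f\in\ker\nu$ forces $\Phi$ to act trivially on the algebraic part of the Mukai lattice $\tilde H(A,\Z)$, which by the structure of the group of autoequivalences of an abelian surface pins down $\Phi$ up to a shift composed with tensoring by a line bundle and translation. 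Tracking the effect on the Albanese and on $H^2(X,\Z)$ then shows the only surviving possibilities are translations by $A[2]$ and tensoring by $\hat A[2]$, giving $f\in(\Z/2\Z)^{\times 8}$.

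The main obstacle is this upper bound, and within it two points require care. First, one must ensure the symplectic resolution introduces no additional cohomologically trivial automorphisms beyond those pulled back from $M$; this uses that $\pi$ is essentially the unique symplectic resolution and that $\ker\nu$, acting trivially on $\NS(X)$, cannot permute the irreducible components of the exceptional locus nontrivially. Second, the passage from ``trivial on $H^2(X,\Z)$'' to ``trivial on the Mukai lattice of $A$'' relies on the compatibility between the Hodge structure of $X$ and that of $A$, together with the Torelli theorem for abelian surfaces; extracting from cohomological triviality the precise finite group $A[2]\times\hat A[2]$, rather than a larger or smaller one, is where the computation is most delicate.
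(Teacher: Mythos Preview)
Your lower bound is essentially the paper's argument: the action of $A\times A^\vee$ on the moduli space, restricted to $A[2]\times A^\vee[2]$, preserves the Albanese fibre and acts trivially on $H^2$ because it deforms inside a connected group. This part is fine.

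The upper bound, however, has a genuine gap. You write that any automorphism $\bar f$ of the singular moduli space ``is induced by an autoequivalence $\Phi$ together with a translation'', and then plan to constrain $\Phi$ via its action on the Mukai lattice. That first assertion is exactly the hard step, and it is not available as a black box. Results of this type exist for certain smooth moduli spaces on $K3$ surfaces, but for the \emph{singular} O'Grady moduli spaces on abelian surfaces there is no general theorem saying that every automorphism comes from a derived autoequivalence of $A$. You are assuming precisely what needs to be proved. Even granting it, pinning down which autoequivalences act trivially on $H^2$ of the resolution (as opposed to the Mukai lattice of $A$) is not a routine bookkeeping exercise; you would need a precise compatibility between the two Hodge structures through the desingularisation.

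The paper avoids derived categories entirely and works instead with a different model admitting a Lagrangian fibration: the Mukai vector is $(0,2H,0)$ rather than $(2,0,-2)$, so the Albanese fibre is a relative compactified Jacobian $\mc{K}^4(|2H|)\to|2H|\cong\PP^3$. An automorphism in $\ker\nu$ preserves this fibration and hence induces an automorphism of $\PP^3$; a direct analysis of the stratification of $|2H|$ by singularity type of the curves (whose closed strata include a copy of the singular Kummer of $A$) shows that any stratification-preserving automorphism of $\PP^3$ is induced by translation by a point of $A[2]$. After composing with such a translation the action is fibrewise on Jacobians, and then a rigidity argument for the restricted relative Theta divisor (it is a $\PP^1$-bundle, hence rigid by the lemma in the preliminaries) forces the fibrewise automorphism to be translation by a point of $K(i^*\Theta_C)$, which is identified with $A[2]$ sitting inside the fibre. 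This is how the remaining factor $A^\vee[2]$ appears. The argument is geometric throughout and never needs to know that automorphisms come from autoequivalences.
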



Let us outline the idea of the proofs of the theorems. The geometry of O'Grady's examples is more complicated and less understood compared to the case of Hilbert schemes of points or Generalised Kummers. Therefore a detailed analysis was needed. In the ten-dimensional case we consider a relative compactified Jacobian of degree four over a (five-dimensional) linear system of genus five curves on a K3 surface. Its resolution of singularities is an \issp manifold of $Og_{10}$-type. There are two main ingredients to the proof of the injectivity of $\nu$. First we show that every automorphism in $\ker\nu$ acts fibrewise on the Jacobian. Secondly we prove that the relative Theta divisor is rigid, thus must be preserved by any such automorphism. For the second step (rigidity of the relative Theta divisor) we first prove that the relative Theta divisor has the structure of a $\PP^1$-bundle. From this we deduce its rigidity using a criterion that should be known to the experts (cf.\ \Ref{lem}{lem_rigid}). We can then conclude that $\ker\nu$ is trivial using the Torelli Theorem for Jacobians.

In the six-dimensional case we follow a similar idea, where this time the first step (fibrewise action) turns out to be easier to prove.  Note that the automorphism induced by $-\id$ on the abelian surface acts trivially on $Og_6$. The group $\ker\nu$ is composed by automorphism induced by translation by two-torsion points on $A\times A^*$, where $A$ is the underlying abelian surface and $A^*$ its dual.

As a last result we study the automorphisms in $\ker\nu$ (in the six-dimensional case) more in detail. In particular, we study their fixed loci and conclude that it either consists of $16$ disjoint K3 surfaces ($30$ cases, see \Ref{prop}{fixed_ta} and \Ref{prop}{fixed_l}), of $2$ disjoint $K3$ surfaces ($45$ cases) or $16$ isolated fixed points (the remaining $180$ cases). (For the last two situations see \Ref{prop}{fixed_mixed}.)

We want to emphasise one more result -- since it is a beautiful result on its own -- which is a side product of the proof of the injectivity of $\nu$ for $Og_{10}$:
\begin{prop*}
Let $S$ be a $K3$ surface being a double cover of $\mathbb{P}^2$ ramified along a sextic curve admitting a unique tritangent. Denote by $H$ the pullback of $\mc{O}(1)$.
Then the rational map $|2H|\dashedrightarrow \overline{\mc{M}_5}$ is injective.
\end{prop*}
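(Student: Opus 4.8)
The plan is to reduce the statement to a rigidity question about how conics meet the sextic $B$, with the unique tritangent supplying exactly the rigidity needed. First I would describe the members of $|2H|$. Since $\pi_*\mc{O}_S=\mc{O}_{\PP^2}\oplus\mc{O}_{\PP^2}(-3)$, pullback gives $H^0(S,2H)=H^0(\PP^2,\mc{O}_{\PP^2}(2))$, so $|2H|=\pi^*|\mc{O}_{\PP^2}(2)|\cong\PP^5$ and every curve in the system is $C_Q:=\pi^{-1}(Q)$ for a conic $Q\subset\PP^2$. Each $C_Q$ is the double cover of $Q\cong\PP^1$ branched along the degree-$12$ divisor $Q\cap B$; it has arithmetic genus $5$, is smooth hyperelliptic when $Q$ is transverse to $B$, and degenerates to a stable nodal curve when $Q$ is tangent to $B$. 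The essential point is that the restriction of the covering involution $\iota$ to $C_Q$ is precisely the deck transformation of $C_Q\to Q$, i.e.\ the hyperelliptic involution.

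The next step uses that this hyperelliptic structure is intrinsic. For a (stable) curve of genus $\ge 2$ the $g^1_2$, together with its involution, is unique, so any isomorphism $\varphi\colon C_{Q_1}\xrightarrow{\sim}C_{Q_2}$ conjugates the two hyperelliptic involutions and hence descends to an isomorphism $\bar\varphi\colon Q_1\xrightarrow{\sim}Q_2$ of the quotient lines with $\bar\varphi(Q_1\cap B)=Q_2\cap B$. Thus $[C_{Q_1}]=[C_{Q_2}]$ in $\overline{\mc{M}_5}$ forces the two twelve-point divisors $Q_i\cap B$ to be abstractly projectively equivalent, and the Proposition becomes the statement that two conics whose intersections with $B$ are projectively equivalent must coincide.

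To attack this I would extend $\bar\varphi$ to a projectivity: any isomorphism of smooth conics is the restriction of some $g\in\mathrm{PGL}_3$ with $g(Q_1)=Q_2$, via $\mathrm{PGL}_2=\aut(\PP^1)\hookrightarrow\mathrm{PGL}_3$. A priori one only knows $g(Q_1\cap B)=Q_2\cap B$, and the whole content is to promote this to $g(B)=B$. This is where the hypothesis enters: the tritangent $L$ is a canonical feature of $B$, and over it the branch divisor $L\cap B=2(p_1+p_2+p_3)$ forces the double cover to split, giving a distinguished decomposition $H=C^++C^-$ into smooth rational curves with $C^+\cdot C^-=3$; uniqueness of the tritangent means this is the only such decomposition. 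I would use this marker to show that $g$ must respect $L$ and its three tangency points, pin $g$ down from that data to deduce $g(B)=B$, and then conclude — since a sextic carrying a unique tritangent admits no nontrivial projective automorphism moving a conic — that $g=\id$ and $Q_1=Q_2$.

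The main obstacle is exactly this rigidity step: passing from the purely abstract coincidence $g(Q_1\cap B)=Q_2\cap B$ of twelve-point sets to the geometric statement $g(B)=B$. The twelve intersection points alone do not determine the sextic, so the argument cannot be closed using $Q_1,Q_2$ in isolation; the unique tritangent is the additional global structure that both rigidifies the matching and suppresses automorphisms of $(\PP^2,B)$. Making this precise — and in particular treating the non-transverse members, where $C_Q$ is nodal and one must argue on the stable model — is the delicate part of the proof.
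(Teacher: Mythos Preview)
Your reduction is correct up to the point where you obtain $g\in\mathrm{PGL}_3$ with $g(Q_1)=Q_2$ and $g(Q_1\cap B)=Q_2\cap B$, but the step you yourself flag as the ``main obstacle'' is a genuine gap, and the tritangent does not supply the leverage you hope for. The condition $g(Q_1\cap B)=Q_2\cap B$ only says that the two sextics $B$ and $g(B)$ meet $Q_2$ in the same twelve points; since sextics through twelve given points on a conic form a linear system of projective dimension $15$, this is very far from forcing $g(B)=B$. The tritangent $L$ is a feature of $B$, not of the pair $(Q_i,Q_i\cap B)$, and nothing in your construction ties $g$ to $L$: the element $g$ is pinned down by twelve points on a conic that in general bears no relation to $L$. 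There is no evident mechanism to promote the coincidence on $Q_2$ to the global statement $g(B)=B$. Even if one could, your final clause is also unsupported: a projective automorphism of $B$ would fix $L$ and its three tangency points, but the stabiliser in $\mathrm{PGL}_3$ of three collinear points is still $2$-dimensional, so uniqueness of the tritangent alone does not kill $\aut(\PP^2,B)$.

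The paper follows an entirely different route that sidesteps this rigidity problem. It first shows that the \emph{differential} of $\varphi\colon|2H|\dashrightarrow\overline{\mc{M}}_5$ is injective, via the normal bundle sequence and the vanishing $h^0(\mc{T}_S|_C)=0$. After resolving indeterminacies the map becomes a proper morphism with injective differential, so global injectivity follows once it is checked on a single saturated divisor. The paper chooses the locus $S^2|H|\subset|2H|$ of reducible curves, which reduces the question to injectivity of $|H|\dashrightarrow\overline{\mc{M}}_2$. Running the same argument again, it then suffices to exhibit \emph{one} stable curve in $|H|$ that is unique in its isomorphism class --- and the double cover of the unique tritangent is exactly such a curve, since any other member of $|H|$ with the same branch configuration would sit over a second tritangent. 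Thus the tritangent is used not to rigidify a projective matching, but to anchor a degree-one count at a single special fibre.
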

See \Ref{prop}{injective_curves} for details.

The motivation to prove these results and to write this article grew out of the attempt to detect (using lattice theory) certain induced automorphisms (on O'Grady's manifolds) that have been constructed by the authors in \cite{MW14}. Using the article at hand we managed to state a lattice theoretic criterion to detect induced automorphisms on O'Grady-type moduli spaces (cf. loc.\ cit.\ Prop.\ 4.5).

The structure of this article is as follows: We gather the required background on the representation $\nu$ and on O'Grady's manifolds in \Ref{sec}{prel}. Then we continue by fully treating the ten-dimensional case in \Ref{sec}{og10}. Before continuing with the study of $\nu$ in the six-dimensional case in \Ref{sec}{og6} we first study the geometry of a special six-dimensional O'Grady manifold in more detail in the second preliminary \Ref{sec}{pre6}. We conclude with the study of the fixed point loci in the final \Ref{sec}{fixed}.

\section*{Acknowledgements}
\addcontentsline{toc}{section}{Acknowledgements}
The first named author would like to thank GNSAGA/INdAM for its support and the research Institute of mathematical sciences of Kyoto University for their hospitality. Moreover he would also like to thank Alice Garbagnati, Arvid Perego and Antonio Rapagnetta for useful discussions. 
 
The second named author wants to thank Gilberto Bini for his re-approved kind hospitality and for supporting his visit to Milano. Furthermore he wants to thank Jesse Kass and Keiji Oguiso for their inspiring suggestions and help and Hisanori Ohashi and Shigeru Mukai for many interesting discussions.

Both authors want to thank Kieran O'Grady for his kind help and Bert van Geemen for an inspiring discussion.

Finally, we are both grateful to the Max Planck Institut f\"ur Mathematik Bonn, for having partially supported the first named author and hosted the second named author.  

\section{Preliminaries}\label{sec:prel}
In this introductory section we will gather the most important background material and known results about the desingularised moduli spaces as introduced by O'Grady.

Let $S$ be a projective $K3$ or abelian surface. Mukai defined a lattice structure on $\widetilde{H}^*(S,\Z):=H^{*,ev}(S,\Z)$ by setting
\[(r_1,l_1,s_1).(r_2,l_2,s_2):=l_1\cdot l_2-r_1s_2-r_2s_1,\]
where $r_i\in H^0,$ $l_i\in H^2$ and $s_i\in H^4$. This lattice is referred to as the \em Mukai lattice \em and we call vectors $v\in \widetilde{H}^*(S,\Z)$ \em Mukai vectors. \em The Mukai lattice is isometric to $U^4\oplus E_8(-1)^2$ if $S$ is a $K3$ and $U^4$ if $S$ is abelian.

Furthermore we may introduce a weight-two Hodge structure on $\widetilde{H}^*(S,\Z)$ by defining the $(1,1)$-part to be
\[H^{1,1}(S)\oplus H^0(S)\oplus H^4(S).\]

For an object $\mc{F}\in D^b(S)$ we define the \em Mukai vector of $\mc{F}$ \em by
\[v(\mc{F}):=\ch(\mc{F})\sqrt{\td_S}.\]

The following theorem summarises the famous results about moduli spaces of stable sheaves on K3 surfaces.

\begin{thm}
Let $S$ be a projective K3 and let $v$ be a primitive Mukai vector. Assume that $H$ is $v$-generic. Then the moduli space $M(v)$ of stable sheaves on $S$ with Mukai vector $v$ is a \hk manifold which is deformation equivalent to the Hilbert scheme of $n$ points, where $2n=v^2+2$. Furthermore we have an isometry of lattices
\begin{equation}
H^2(M(v),\Z)\cong v^\perp\subset \widetilde{H}^*(S,\Z) \label{eq_hodge_isom}
\end{equation}
 which preserves the weight-two Hodge structures.
\end{thm}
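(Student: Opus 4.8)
The plan is to assemble the statement from an infinitesimal part (smoothness together with the symplectic form, due to Mukai) and a global part (nonemptiness, irreducibility, the deformation type, and the full isometry, due to O'Grady and Yoshioka). First I would analyse the deformation theory of a stable sheaf $\mc{F}$ with $v(\mc{F})=v$. Since $v$ is primitive and $H$ is $v$-generic, stability coincides with semistability, so $M(v)$ is projective and every point is stable, hence simple: $\Hom(\mc{F},\mc{F})=\C$. The Zariski tangent space at $[\mc{F}]$ is $\Ext^1(\mc{F},\mc{F})$ and the obstructions lie in the trace-free part of $\Ext^2(\mc{F},\mc{F})$. Using $\omega_S\cong\mc{O}_S$ and Serre duality, $\Ext^2(\mc{F},\mc{F})\cong\Hom(\mc{F},\mc{F})^\vee=\C$, so the trace map is an isomorphism and the trace-free obstruction space vanishes. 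Thus $M(v)$ is smooth, and the Mukai–Riemann–Roch formula $\chi(\mc{F},\mc{F})=-\,v\cdot v$ gives $\dim M(v)=\dim\Ext^1(\mc{F},\mc{F})=v^2+2=2n$.

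Next I would construct the symplectic form. The Yoneda product followed by the trace,
\[
\Ext^1(\mc{F},\mc{F})\times\Ext^1(\mc{F},\mc{F})\xrightarrow{\ \circ\ }\Ext^2(\mc{F},\mc{F})\xrightarrow{\ \tr\ }H^2(S,\mc{O}_S)\cong\C,
\]
is alternating and, by Serre duality, nondegenerate. One checks that it varies holomorphically and glues to a global closed, everywhere nondegenerate holomorphic $2$-form $\sigma$ on $M(v)$. Together with smoothness this makes $M(v)$ a \hk manifold, provided one also knows that it is irreducible, simply connected, and has $h^{2,0}=1$. These properties, together with the nonemptiness of $M(v)$ for every primitive $v$ with $v^2\geq -2$, are the first genuinely global input, and I would invoke the theorems of O'Grady and Yoshioka here rather than reprove them. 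For the deformation equivalence I would likewise appeal to Yoshioka: by varying the complex structure and polarisation of $S$ together with $v$ in a family keeping $M(v)$ smooth, and normalising $v$ by Fourier–Mukai transforms, one connects $M(v)$ to the distinguished case $v_0=(1,0,1-n)$, where $M(v_0)=S^{[n]}$ is the Hilbert scheme of $n$ points; this yields the claimed deformation type and in particular $b_2=23$.

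Finally, for the isometry (\ref{eq_hodge_isom}) I would use the Mukai morphism. After choosing a quasi-universal family $\mc{E}$ on $S\times M(v)$ of similitude $\rho$, define
\[
\theta_v\colon v^\perp\longrightarrow H^2(M(v),\Z),\qquad \theta_v(\alpha)=\tfrac{1}{\rho}\Big[\,p_{M*}\big(\ch(\mc{E})\,p_S^*(\sqrt{\td_S}\cdot\alpha^\vee)\big)\Big]_{H^2},
\]
the brackets denoting the $H^2$-component and the exact signs being fixed by convention. Since $\mc{E}$ is a family of sheaves, its Mukai vector is of type $(1,1)$ on the product, so $\theta_v$ respects the weight-two Hodge structures; a Grothendieck–Riemann–Roch computation shows it preserves the quadratic forms, hence is an isometry onto its image. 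That $\theta_v$ is an isomorphism onto all of $H^2(M(v),\Z)$ is verified on the model $v_0$, where $v_0^\perp\cong H^2(S,\Z)\oplus\Z\delta$ matches Beauville's description of $H^2(S^{[n]},\Z)$, and then transported along the deformation of the previous step.

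The main obstacle is not the infinitesimal theory: smoothness, the symplectic form, and the isometry property of $\theta_v$ are formal consequences of Serre duality and Grothendieck–Riemann–Roch. The real difficulty lies in the global statements — nonemptiness and irreducibility of $M(v)$ for arbitrary primitive $v$, and the deformation equivalence to $S^{[n]}$ — both of which require delicate constructions of families and Fourier–Mukai techniques and cannot be reached by local methods. The surjectivity of $\theta_v$ rests on this same reduction to the Hilbert scheme together with Beauville's lattice computation.
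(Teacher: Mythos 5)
The paper does not prove this theorem at all: it is stated as a summary of known results and the ``proof'' is simply the citation \cite[Thm.\ 1.2]{PR14}, which in turn rests on the work of Mukai, O'Grady and Yoshioka. Your sketch is an accurate outline of the standard argument behind that citation, and it correctly isolates which ingredients are formal (smoothness via Serre duality and the vanishing of the trace-free obstruction space, the dimension count $\dim\Ext^1(\mc{F},\mc{F})=v^2+2$, the symplectic form via Yoneda pairing and trace, the Hodge- and quadratic-form-compatibility of the Mukai morphism $\theta_v$) and which require the deep global input that must be quoted (nonemptiness and irreducibility for arbitrary primitive $v$, the Fourier--Mukai reduction to $v_0=(1,0,1-n)$ giving the deformation equivalence to $S^{[n]}$, and hence the surjectivity of $\theta_v$ via Beauville's computation of $H^2(S^{[n]},\Z)$); so the proposal is correct and consistent with the source the paper relies on.
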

\begin{proof}
\cite[Thm.\ 1.2]{PR14}.
\end{proof}

O'Grady studied a particular case of a non-primitive Mukai vector: Let $v\in \widetilde{H}^*(S,\Z)$ be a primitive Mukai vector of square $2$.

\begin{thm}[O'Grady, Perego$-$Rapagnetta]
The moduli space $M(2v)$ is a $2$-factorial symplectic variety of dimension ten admitting a Beauville$-$Bogomolov form and a pure weight-two Hodge structure on $H^2(M(2v),\Z)$ such that equation (\ref{eq_hodge_isom}) holds. Furthermore it admits a symplectic resolution $\widetilde{M}(2v)$ which is a \hk manifold that is not deformation equivalent to any of the known examples.
\end{thm}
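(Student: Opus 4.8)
The plan is to combine O'Grady's explicit resolution of singularities with the Hodge- and lattice-theoretic refinements of Perego$-$Rapagnetta. I would first treat the smooth part. By Mukai's theorem the open locus $M^s(2v)\subset M(2v)$ of stable sheaves is smooth: at a stable $\mc{F}$ the Zariski tangent space is $\Ext^1(\mc{F},\mc{F})$, and the obstructions lie in the kernel of the trace $\tr\colon\Ext^2(\mc{F},\mc{F})\to H^2(S,\mc{O}_S)$, which vanishes because this trace is an isomorphism. Stability gives $\Hom(\mc{F},\mc{F})=\C$ and, by Serre duality, $\Ext^2(\mc{F},\mc{F})\cong\C$, while $\chi(\mc{F},\mc{F})=-(2v)^2$, so that $\dim\Ext^1(\mc{F},\mc{F})=2+(2v)^2=10$. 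On this locus the Yoneda product $\Ext^1(\mc{F},\mc{F})\times\Ext^1(\mc{F},\mc{F})\to\Ext^2(\mc{F},\mc{F})$, composed with the trace $\Ext^2(\mc{F},\mc{F})\xrightarrow{\ \tr\ }H^2(S,\mc{O}_S)\cong\C$, is alternating and non-degenerate, and hence defines a holomorphic symplectic form $\sigma$ on $M^s(2v)$. Since $M(2v)$ is normal, it will be a symplectic variety once $\sigma$ is shown to extend to a resolution.

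Next I would describe the singular locus, which is exactly the strictly semistable locus: it parametrises the S-equivalence classes of sheaves $\mc{F}_1\oplus\mc{F}_2$ with each $\mc{F}_i$ stable of Mukai vector $v$. This locus is the image of (the symmetric product of) $M(v)\times M(v)$, hence has dimension $8$ and codimension $2$, with a deepest stratum isomorphic to $M(v)$ (where $\mc{F}_1\cong\mc{F}_2$) of codimension $6$. By Luna's \'etale slice theorem applied to the GIT construction of $M(2v)$, an analytic neighbourhood of a polystable point is modelled on a symplectic reduction $\mu^{-1}(0)/\!\!/G$, with $G$ the reductive stabiliser and $\mu$ the moment map for its linear action on $\Ext^1$. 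For $v^2=2$ this local model carries a singularity of one fixed type, and a computation of its local class group yields $2$-factoriality; this is where the factoriality index $2$ appears, and is one of the contributions of Perego$-$Rapagnetta.

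The main obstacle is the construction of the symplectic resolution $\widetilde{M}(2v)$ and the proof that it is smooth. Following O'Grady, I would resolve along the strictly semistable locus by an explicit sequence of blow-ups and a blow-down, and then verify on the local model above that the outcome is smooth and that $\sigma$ extends to a \emph{non-degenerate} closed $2$-form. The difficulty is purely local but geometrically delicate: one must keep careful track of the exceptional divisors and confirm that no new singularities survive the procedure. Once $\sigma$ extends non-degenerately, $\widetilde{M}(2v)\to M(2v)$ is a symplectic resolution, and it is automatically crepant because $\sigma^{\wedge 5}$ trivialises the canonical bundle.

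Finally, to identify $\widetilde{M}(2v)$ as an \issp manifold of a new deformation type, I would check that it is simply connected with $h^{2,0}=1$, so that $\sigma$ spans $H^{2,0}$, and then compute its Betti numbers. O'Grady's calculation gives $b_2(\widetilde{M}(2v))=24$, which matches neither the Hilbert scheme $S^{[5]}$ ($b_2=23$) nor the generalised Kummer variety ($b_2=7$); as $b_2$ is a deformation invariant, this rules out deformation equivalence with the previously known examples. For the statements about the singular space itself I would follow Perego$-$Rapagnetta: since the singular locus has codimension $2$, the group $H^2(M(2v),\Z)$ inherits a pure weight-two Hodge structure, and the Beauville$-$Bogomolov form together with the Hodge isometry $H^2(M(2v),\Z)\cong v^\perp$ of (\ref{eq_hodge_isom}) are produced by the same universal-family argument that establishes (\ref{eq_hodge_isom}) in the primitive case (note that the resolution adds one class, consistent with $b_2(\widetilde{M}(2v))=b_2(M(2v))+1=24$).
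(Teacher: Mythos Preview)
Your outline is a faithful sketch of the O'Grady and Perego$-$Rapagnetta arguments, but note that in the paper this theorem is a quoted background result: the ``proof'' consists solely of the citations \cite[Thm.\ 1.6 and Thm.\ 1.7]{PR13} and \cite[Thm.\ 1.1]{PR14}. So there is no discrepancy in approach to discuss; you have simply unpacked what those references contain, whereas the paper defers entirely to them.

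A few minor remarks on your sketch, in case you intend to expand it. First, while O'Grady's original construction does proceed through several blow-ups and a contraction, it was subsequently shown by Lehn$-$Sorger that a single blow-up of the reduced singular locus already yields the symplectic resolution; either description is acceptable here. Second, your dimension count for the deepest stratum is right (it is $M(v)$, of dimension $v^2+2=4$), and so is the Betti number $b_2(\widetilde{M}(2v))=24$, which indeed separates $Og_{10}$ from $S^{[5]}$ and from generalised Kummers. Third, the $2$-factoriality is the genuinely delicate input from \cite{PR14}; your indication that it comes from the local class group of the slice model is correct in spirit, but the actual computation is substantial and not something one can reproduce in a line.
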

\begin{proof}
\cite[Thm.\ 1.6 and Thm.\ 1.7]{PR13} and \cite[Thm.\ 1.1]{PR14}.
\end{proof}

In the case of abelian surfaces there is one more step to take:

Let $A$ be an abelian surface and $v$ a primitive Mukai vector. Then the moduli space $M(v)$ is not simply connected. Indeed, for a sheaf $\mc{F}$ we define $alb(\mc{F}):=(\Sigma c_2(\mathcal{F}),\det(\mathcal{F}))\in A\times A^*$ yielding an isotrivial surjective map $alb\,\colon M(v)\rightarrow A\times A^\vee$ which turns out to be the Albanese map of $M(v)$. The fibre $K(v):=alb^{-1}(0,0)$ is a \hk manifold and equation (\ref{eq_hodge_isom}) holds, if we compose the left hand side with the restriction to the fibre.

Finally we can also consider non-primitive Mukai vectors as above. So let us fix $v$ with $v^2=2$. Then we have a commuting diagram of resolutions and Albanese fibres:

\[\xymatrix{
\widetilde{K}(2v)\ar[r] \ar[d]&\widetilde{M}(2v)\ar[d] \ar[dr]^{alb}\\
K(2v)\ar[r] & M(2v)\ar[r]_{alb} & A\times A^\vee,
}\]
where $\widetilde{K}(2v)$ is a six dimensional \hk manifold.\\
\vspace{10pt}\\
Next, we state two fundamental results concerning automorphisms of \issp manifolds acting trivially on cohomology. Let $X$ be an \issp manifold. We let
\[\nu\colon \aut(X)\rightarrow O(H^2(X,\Z))\]
be the cohomological representation.
\begin{prop}[Huybrechts]\label{prop:prop_huy}
The kernel of $\nu$ is finite.
\end{prop}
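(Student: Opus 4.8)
The plan is to combine the rigidity of the complex structure of an \issp manifold with the classical finiteness theorem of Fujiki and Lieberman for automorphisms preserving a \kahl class. First I would invoke the theorem of Bochner--Montgomery: for a compact complex manifold the automorphism group $\aut(X)$ is a complex Lie group whose Lie algebra is the space $H^0(X,T_X)$ of global holomorphic vector fields. For an \issp manifold the symplectic form yields an isomorphism $T_X\cong\Omega^1_X$, whence $H^0(X,T_X)\cong H^0(X,\Omega^1_X)=H^{1,0}(X)=0$, the last vanishing because $b_1(X)=0$. Therefore the identity component $\aut^0(X)$ is trivial and $\aut(X)$ is \emph{discrete}.

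Next I would note that any $f\in\ker\nu$ acts as the identity on $H^2(X,\Z)$, hence on $H^2(X,\R)$, and in particular fixes a chosen \kahl class $[\omega]$. Thus $\ker\nu$ is contained in the stabiliser
\[\aut_{[\omega]}(X):=\{f\in\aut(X)\mid f^*[\omega]=[\omega]\}.\]

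The decisive input is the theorem of Fujiki and Lieberman, according to which the quotient $\aut_{[\omega]}(X)/\aut^0(X)$ is finite for any compact \kahl manifold. Since we have already seen that $\aut^0(X)$ is trivial, the group $\aut_{[\omega]}(X)$ is itself finite, and hence so is its subgroup $\ker\nu$. Equivalently, one may phrase the last two steps as the observation that $\ker\nu$ is a discrete subgroup of the compact group carved out by a $\ker\nu$-invariant \kahl metric, and a discrete subgroup of a compact group is finite.

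The main obstacle is that the Fujiki--Lieberman finiteness is far from elementary: it rests on the boundedness of the relevant family of graphs inside a Douady (or Chow) space, which forces the automorphisms fixing a \kahl class to form a bounded, hence relatively compact, family. Granting this, the argument is purely formal; the one point worth checking is that $\aut^0(X)$ is indeed trivial, \ie that the symplectic form induces the stated isomorphism $T_X\cong\Omega^1_X$ and that $H^{1,0}(X)$ vanishes for an \issp manifold, both of which are standard.
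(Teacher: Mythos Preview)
Your argument is correct and is essentially the standard proof due to Huybrechts; the paper does not reproduce it but simply cites \cite[Prop.\ 9.1]{Huy99}. In that reference the reasoning is exactly the one you outline: $H^0(X,T_X)\cong H^0(X,\Omega^1_X)=0$ forces $\aut(X)$ to be discrete, and since every element of $\ker\nu$ fixes a \kahl class, the Fujiki--Lieberman theorem yields finiteness.
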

\begin{proof}
\cite[Prop.\ 9.1]{Huy99}
\end{proof}

\begin{thm}[Hassett$-$Tschinkel] \label{thm:thm_HT}The kernel of $\nu$ is a deformation invariant of the manifold $X$.
\end{thm}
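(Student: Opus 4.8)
The plan is to establish the statement by deformation theory, with the \emph{finiteness} of $\ker\nu$ (\Ref{prop}{prop_huy}) as the crucial input: since $\ker\nu$ is finite, every $g\in\ker\nu$ has finite order, and we may use linearization of finite-order actions. It suffices to show that over the base of a local universal family the association $t\mapsto\ker\nu_{X_t}$ defines a locally constant sheaf of finite groups; deformation invariance for arbitrary deformation-equivalent manifolds then follows by joining them through a chain of such local families and invoking connectedness of the base.

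First I would set up the Kuranishi family $\pi\colon\mc{X}\to\mathrm{Def}(X)$. By Bogomolov--Tian--Todorov unobstructedness the base $\mathrm{Def}(X)$ is smooth, with tangent space $T_0\mathrm{Def}(X)\cong H^1(X,T_X)$. The holomorphic symplectic form $\sigma\in H^0(X,\Omega_X^2)$ induces a canonical isomorphism $T_X\xrightarrow{\sim}\Omega_X^1$, $v\mapsto\iota_v\sigma$, whence $H^1(X,T_X)\cong H^1(X,\Omega_X^1)=H^{1,1}(X)\subset H^2(X,\C)$.

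The heart of the argument is then a linearization step. Fix $g\in\ker\nu$. Since $g$ acts trivially on $H^2(X,\Z)$, it acts trivially on $H^2(X,\C)$; in particular $g^*\sigma=\sigma$ and $g$ acts trivially on $H^{1,1}(X)$, and because $g^*\sigma=\sigma$ the identification above is $g$-equivariant, so $g$ acts trivially on $T_0\mathrm{Def}(X)$. By functoriality $g$ acts on $\mathrm{Def}(X)$ fixing $0$ with trivial differential there; as $g$ has finite order, Cartan's lemma conjugates the action to its trivial linearization, so $g$ acts trivially near $0$. Hence $\mc{X}\to\mathrm{Def}(X)$ is $g$-equivariant over a neighbourhood, and $g$ extends to a fibrewise automorphism $g_t$ of each nearby fibre $X_t$. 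Because $g_0=g$ acts trivially on the local system $R^2\pi_*\Z$ and this action is locally constant, every $g_t$ acts trivially on $H^2(X_t,\Z)$, i.e.\ $g_t\in\ker\nu_{X_t}$; running the same argument based at $X_t$ produces an inverse, so $\ker\nu_{X_0}\cong\ker\nu_{X_t}$. Equivalently, the relative kernel $\ker\bigl(\underline{\aut}(\mc{X}/\mathrm{Def}(X))\to O(R^2\pi_*\Z)\bigr)$ is a finite, unramified --- hence \'etale --- group scheme over the connected smooth base, with fibres isomorphic as groups.

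The main obstacle is precisely the linearization/extension step: one must verify that a finite-order automorphism with trivial differential at a fixed point acts trivially on a full neighbourhood (Cartan's lemma), that the $g$-equivariant structure on the family yields genuine biholomorphisms $g_t$ of the nearby fibres, and that the locally defined finite groups glue into a locally constant sheaf of \emph{groups} rather than merely giving injections in each direction. Once this is in place, the global passage --- reducing an arbitrary deformation equivalence to a chain of Kuranishi deformations and transporting the group along the connected base --- is a formal, if slightly technical, matter.
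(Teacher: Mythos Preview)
The paper does not prove this theorem at all: its entire proof is the citation \cite[Thm.\ 2.1]{HT13}. Your proposal is not merely ``the same approach as the paper'' --- it is a faithful sketch of the actual Hassett--Tschinkel argument that the paper invokes. The key ingredients you identify (finiteness of $\ker\nu$ to guarantee finite order, the symplectic identification $H^1(X,T_X)\cong H^{1,1}(X)$ so that triviality on $H^2$ forces triviality on the tangent space of $\mathrm{Def}(X)$, Cartan's linearization to conclude that $g$ acts trivially on the base and hence extends over the family, and local constancy of $R^2\pi_*\Z$) are exactly those used in \cite{HT13}. Your caveats about the gluing of the locally constant sheaf of groups and the passage along chains of Kuranishi families are the right technical points to flag, and they are handled in the cited reference; nothing in your outline is wrong or missing a genuine idea.
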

\begin{proof}
\cite[Thm.\ 2.1]{HT13}
\end{proof}
\vspace{10pt}
Finally we include a result concerning rigid divisors on symplectic varieties.
\begin{lem}\label{lem:lem_rigid}
Let $X$ be a $\mathbb{Q}$-factorial symplectic variety and let $D\subset X$ be a prime divisor admitting a fibration $g\colon D\rightarrow Z$ with generic fibre isomorphic to $\PP^1$. Then $D$ is rigid, i.e.\ $h^0(X,\mc{O}(D))=1$.
\end{lem}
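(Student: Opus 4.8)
The plan is to reduce the statement to a single numerical fact about the generic fibre of $g$, namely that $D\cdot F<0$ (in fact $D\cdot F=-2$) for a generic fibre $F\cong\PP^1$, and then to extract rigidity from the negativity of this intersection number together with the effectivity of any member of $|D|$.

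First I would fix a generic point $z\in Z$ and the corresponding fibre $F=g^{-1}(z)$, which I take to lie in the smooth locus $X^{\mathrm{sm}}$ where the symplectic form lives and the canonical class is trivial. On such an $F$ I would compute $D\cdot F$ using the two normal bundle sequences
\[0\to T_F\to T_X|_F\to N_{F/X}\to 0, \qquad 0\to N_{F/D}\to N_{F/X}\to \mc{O}_X(D)|_F\to 0.\]
By generic smoothness (we are in characteristic zero) the morphism $g$ is smooth over the generic point of $Z$, so the generic fibre deforms only in the base directions and $N_{F/D}\cong\mc{O}_F^{\oplus\dim Z}$ has degree $0$. Since $F\cong\PP^1$ we have $\deg T_F=2$, and $\deg T_X|_F=-K_X\cdot F=0$ because the symplectic variety has (numerically) trivial canonical class. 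Combining these gives $\deg N_{F/X}=-2$, whence $D\cdot F=\deg\mc{O}_X(D)|_F=-2$. This is precisely where the hypothesis that the generic fibre is \emph{rational} enters: a fibre of genus $g$ would yield $D\cdot F=2g-2$, which is $\geq 0$ as soon as $g\geq 1$, and no rigidity could follow.

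With $D\cdot F=-2<0$ in hand the rigidity argument is formal. Let $D'$ be any effective divisor with $\mc{O}_X(D')\cong\mc{O}_X(D)$; passing to a multiple of $D$ that is Cartier (possible since $X$ is $\mathbb{Q}$-factorial) shows $D'\cdot F=D\cdot F<0$ for every generic fibre $F$. If some such $F$ were \emph{not} contained in $\mathrm{Supp}(D')$, the effective divisor $D'$ would meet the curve $F$ with non-negative multiplicity, a contradiction; hence every generic fibre lies in $\mathrm{Supp}(D')$. As these fibres cover a dense subset of $D$ and $\mathrm{Supp}(D')$ is closed, we obtain $D\subseteq\mathrm{Supp}(D')$, and since $D$ is prime it occurs in $D'$, say $D'=D+E$ with $E\geq 0$. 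Then $E$ is an effective divisor with $\mc{O}_X(E)\cong\mc{O}_X$, so $E=0$ because $h^0(X,\mc{O}_X)=1$. Therefore $D'=D$, i.e.\ $h^0(X,\mc{O}(D))=1$.

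The main obstacle I anticipate lies in the passage to $X^{\mathrm{sm}}$: one must justify that a generic fibre genuinely avoids $\mathrm{Sing}(X)$ (and the non-smooth fibres of $g$), so that the tangent and normal bundle sequences above are legitimate and the degree count $\deg T_X|_F=0$, $\deg N_{F/D}=0$ really hold. If $\mathrm{Sing}(X)\cap D$ happened to meet every fibre, one would instead have to work on a resolution and control the discrepancy contributions to $K\cdot\widetilde F$; it is worth noting that for the conclusion only the \emph{sign} $D\cdot F<0$ is needed, not the exact value $-2$, which gives some room in handling these corrections. Once the intersection-theoretic input $D\cdot F<0$ is secured, the remainder of the proof is purely formal.
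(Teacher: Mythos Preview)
Your argument is correct, but it proceeds along a different line than the paper's. The paper's proof is a two-line cohomological vanishing: using adjunction $K_D=(K_X+D)|_D=\mc{O}_D(D)$ (since $K_X$ is trivial), the restriction sequence $0\to\mc{O}_X\to\mc{O}_X(D)\to K_D\to 0$ reduces the claim to $H^0(D,K_D)=0$; this is then obtained from $H^0(D,K_D)=H^0(Z,g_*K_D)$ together with the observation that $K_D$ has negative degree on the generic fibre, so $g_*K_D=0$. You instead compute $D\cdot F=-2$ via the tangent/normal bundle sequences and run the standard containment argument (negative intersection forces $F\subset D'$ for every effective $D'\in|D|$, hence $D\subset D'$, hence $D'=D$). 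Both routes rest on the same numerical input, namely that the restriction of $\mc{O}_X(D)$ to a generic fibre has negative degree; the paper packages this as a pushforward vanishing, while you package it as an intersection-theoretic rigidity. The paper's version is shorter and avoids the explicit normal-bundle bookkeeping; your version has the advantage of making the role of the $\mathbb{Q}$-factoriality and of the smooth locus more transparent, and your caveat about $\mathrm{Sing}(X)$ is well placed---the paper tacitly assumes the generic fibre sits in $X^{\mathrm{sm}}$ just as you do.
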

\begin{proof}
Since $X$ has trivial canonical bundle, it is enough to prove that $H^0(D,K_D)=0.$  But the latter is isomorphic to $H^0(Z,g_*K_D)$ and the restriction of $K_D$ to a fibre of $g$ is $\mc{O}_{\PP^1}(-1)$, thus $g_*K_D$ is $0$.
\end{proof}

\section{The Ten-Dimensional Case}\label{sec:og10}
In this chapter we will prove the following theorem:

\begin{thm}\label{thm:thm_og10}
Let $X$ be a manifold of $Og_{10}$-type. Then the cohomological representation
\[\nu\colon \aut(X)\rightarrow O(H^2(X,\Z))\]
is injective.
\end{thm}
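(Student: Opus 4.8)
The plan is to exploit the deformation invariance of $\ker\nu$ (\Ref{thm}{thm_HT}): since the isomorphism type of $\ker\nu$ is constant on the $Og_{10}$-deformation class, it suffices to exhibit a single manifold of $Og_{10}$-type for which $\ker\nu$ is trivial. I would choose the model in which $X=\widetilde{M}(2v)$ is the symplectic resolution of a relative compactified Jacobian of degree $4=g-1$ over the five-dimensional linear system $|2H|\cong\PP^5$ of genus-five curves on a suitably generic $K3$ surface $S$ (a double cover of $\PP^2$ satisfying the tritangent hypothesis of \Ref{prop}{injective_curves}). Write $\pi\colon X\to\PP^5$ for the support fibration, whose generic fibre is the Jacobian $\mathrm{Jac}^4(C)$, and fix $f\in\ker\nu$.

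First I would show that $f$ acts fibrewise. The line bundle $L=\pi^*\mc{O}(1)$ has $c_1(L)\in H^2(X,\Z)$ fixed by $f$; as $X$ is simply connected, $c_1$ is injective on $\Pic(X)$, so $f^*L\cong L$ and $f$ descends to $\bar f\in\mathrm{PGL}_6$ with $\pi\circ f=\bar f\circ\pi$. For generic $[C]\in\PP^5$, $f$ carries $\pi^{-1}([C])$ isomorphically onto $\pi^{-1}(\bar f[C])$, inducing an isomorphism of Jacobians respecting the principal (theta) polarisation, which is intrinsic to a Jacobian. By the Torelli theorem this forces $C\cong\bar fC$, and since the classifying map $|2H|\dashedrightarrow\overline{\mc{M}_5}$ is injective (\Ref{prop}{injective_curves}), we get $\bar f[C]=[C]$ for all $[C]$, hence $\bar f=\id$ and $f$ preserves every fibre.

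Second I would prove that the relative Theta divisor $\Theta\subset X$ is rigid and therefore $f$-invariant. The strategy is to equip the prime divisor $\Theta$ with a fibration $g\colon\Theta\to Z$ whose generic fibre is $\PP^1$ — exploiting that every member of $|2H|$ is a double cover of a conic and hence carries a $g^1_2$, and tracking the resulting pencils across the relative $W_{g-1}$, including over the discriminant and through the symplectic resolution — and then to invoke \Ref{lem}{lem_rigid}. Rigidity gives $h^0(X,\mc{O}(\Theta))=1$; since $f$ fixes the class $[\Theta]\in H^2(X,\Z)$, the unique effective representative is preserved, so $f(\Theta)=\Theta$, and consequently $f$ preserves $\Theta_C$ on each smooth fibre.

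Finally I would close with Torelli for Jacobians. On a generic smooth fibre $f$ is an automorphism of the torsor $\mathrm{Jac}^4(C)$ preserving $\Theta_C$; acting trivially on $H^2$ it acts trivially on $H^1(\mathrm{Jac}(C))$, so it is a translation, and a translation fixing a principal polarisation is trivial. Thus $f$ restricts to the identity on a dense family of fibres, whence $f=\id$ and $\ker\nu$ is trivial. I expect the main obstacle to be the middle step: producing the global $\PP^1$-fibration of the relative Theta divisor, and in particular controlling its geometry over the singular curves of $|2H|$ and on O'Grady's resolution, rather than merely on the open locus of smooth Jacobians.
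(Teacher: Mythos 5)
Your overall architecture coincides with the paper's: deform to the resolution of the relative compactified Jacobian $\mc{J}^4(|2H|)$ over a double-cover $K3$ satisfying the tritangent condition, show the action is fibrewise using Torelli plus the injectivity of $|2H|\dashedrightarrow \overline{\mc{M}_5}$, prove the relative theta divisor is rigid via \Ref{lem}{lem_rigid}, and conclude by Torelli that a fibre-preserving automorphism fixing the theta divisor is the identity. The gap sits exactly where you predicted it, and the mechanism you sketch for filling it would not work. You propose to extract the $\PP^1$-fibration of $\Theta$ from the $g^1_2$ carried by the hyperelliptic members of $|2H|$. But the pencils $|g^1_2+p+q|$ are $\PP^1$'s inside $\mathrm{Sym}^4C$, and they are precisely the fibres that the Abel--Jacobi map $\mathrm{Sym}^4C\rightarrow W_4(C)=\Theta_C$ contracts: in $\Theta_C$ itself they are single points, and the locus $W^1_4(C)$ they sweep out is a proper, two-dimensional subvariety of the four-dimensional $\Theta_C$. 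A general point of $\Theta_C$ has $h^0=1$ and moves in no pencil inside its own fibre, so there is no $\PP^1$-fibration of $\Theta$ in the fibre direction of $\pi$.

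The paper's fibration goes in the transverse direction: send $\mc{O}_C(p_1+\cdots+p_4)\in\Theta_C$ to $p_1+\cdots+p_4\in \mathrm{Sym}^4S$ (well defined at the general point, where the effective representative is unique). The fibre over four general points of $S$ is the pencil of curves in $|2H|\cong\PP^5$ passing through them --- four linear conditions cutting out a line --- so the generic fibre is a $\PP^1$ of \emph{curves}, not of divisors on a fixed curve, and \Ref{lem}{lem_rigid} applies. With that substitution the rest of your argument goes through: your appeal to the polarisation being ``intrinsic'' is legitimate for the generic fibre, whose Jacobian is simple with N\'eron--Severi group $\Z$ (the paper instead gets matching of theta divisors from the rigidity already established), and in the last step what kills a nontrivial translation is that it would move the theta \emph{divisor} within its linear/algebraic equivalence class, contradicting $h^0(\mc{O}(\Theta))=1$, i.e.\ the translation must lie in $K(\Theta)=0$ --- not merely that it fixes the polarisation class, which every translation does.
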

\begin{proof}
By 
\Ref{thm}{thm_HT} the kernel of $\nu$ is a deformation invariant, thus we may assume that $X$ is the desingularisation of the moduli space $M(2v)$, $v=(0,H,2)$ on a $K3$ surface $S$ which is a double cover of $\PP^2$ branched along a sextic curve $\Gamma$ and where $H$ denotes the pullback of $\mc{O}(1)$. That is, $X$ is the desingularisation of the relative compactified Jacobian $M(0,2H,4)=\mc{J}^4(|2H|)$ of degree four over $|2H|$ and it comes with a lagrangian fibration $X\rightarrow |2H|$ which factors as the blow down followed by the map $\pi\colon \mc{J}^4(|2H|)\rightarrow |2H|$ assigning to a sheaf its support. We may choose the sextic $\Gamma$ as follows. Let $\Gamma^*$ be a plane quartic with an ordinary triple point. Its dual curve $\Gamma$ is a sextic curve with a unique tritangent.
Now, let $\psi$ be an automorphism of $X$ acting trivially on $H^2$. 
Let us prove that $\psi=\id$. First of all, $\psi$ fixes the class of the exceptional divisor of the blow up $X\rightarrow \mc{J}^4(|2H|)$. Thus the automorphism descends to an automorphism $\psi'$ of the singular relative Jacobian $\mc{J}^4(|2H|)$ still acting trivially on second cohomology.

\begin{lem}\label{lem:lem_og10_rigid}
The relative theta divisor $\Theta_{|2H|}$ is an effective rigid divisor on $\mc{J}^4(|2H|)$.
\end{lem}


\begin{proof}
We will use \Ref{lem}{lem_rigid}. Let $C$ be a general curve in $|2H|$. The fibre $\pi^{-1}(C)$ is isomorphic to the Jacobian $\mc{J}^4(C)$. The Theta divisor $\Theta_C$ is given as
\[ \{ \mc{O}(p_1+\cdots +p_4) \mid p_1,\dots,p_4\in C\}.\]
We can therefore define a rational map
\begin{eqnarray*}
\Theta_{|2H|}&\dashedrightarrow& Sym^4S,\\
\mc{O}(p_1+\cdots p_4)&\mapsto & p_1+\cdots +p_4.
\end{eqnarray*} 
The general fibre of this map can be identified with the set of curves in $|2H|$ that pass through four given points. These are four linear conditions cutting out a line.
\end{proof}

Since the class of the pullback $\pi^*\mc{O}(1)$ is fixed by $\psi'$, we see that $\pi$ is $\psi'$-equivariant, that is, $\psi'$ maps fibres of $\pi$ to fibres. Since generically these fibres are Jacobians of smooth curves and -- by the above lemma -- the classes of the respective theta divisors are mapped to each other, the Torelli theorem for Jacobians yields an isomorphism of the underlying curves. We continue by showing that this already implies that $\psi'$ acts fibrewise. We will therefore prove the following result which is interesting on its own.

\begin{prop}\label{prop:injective_curves}
Let $S$ be a $K3$ surface being a double cover of $\PP^2$ ramified along a sextic curve that admits a unique tritangent. Denote by $H$ the pullback of $\mc{O}(1)$.
Then the rational map $\varphi\colon|2H|\dashedrightarrow \overline{\mc{M}_5}$ is injective.
\end{prop}

\begin{proof}

First we note that the differential of the map is injective. This can be seen as follows: Let $C\in |2H|$ be a stable curve. The differential of $\varphi$ at the point corresponding to $C$ is given as the coboundary map
\[ H^0(\mc{N}_{C|S}) \rightarrow H^1(\mc{T}_C)\]
in the long exact cohomology sequence associated with the normal bundle sequence
\[ 0\rightarrow \mc{T}_C \rightarrow \mc{T}_S|_C \rightarrow \mc{N}_{C|S}\rightarrow 0.\]
Thus it is enough to prove $h^0(\mc{T}_S|_C)=0.$ This can be done using the same method as in the second half of the proof of Proposition 1.2 in \cite{CK13}. The rational map $\varphi$ has an indeterminacy locus of codimension at least two and can be extended to a morphism from a suitable blow up of $|2H|$. Thus it is enough to prove injectivity along a divisor which is saturated in the fibres (i.e.\ a divisor $D$ such that $\varphi^{-1}(\varphi(D))=D)$. We will choose this divisor to be the symmetric square $S^2|H|\subset |2H|$ corresponding to reducible curves. The rational map $\varphi|_{S^2|H|}$ is given as the symmetric square of the map $|H|\dashedrightarrow \overline{\mc{M}}_2$. Thus we have reduced the problem to showing that the latter map is injective. Again, by blowing up $|H|$ we can extend this map to a proper morphism $\widetilde{|H|}\rightarrow \overline{\mc{M}}_2$. Furthermore, the discussion in Chapter 3C of \cite{HM98} shows that the exceptional locus in $\widetilde{|H|}$ is mapped to the locus in $\overline{\mc{M}}_2$ of curves having an elliptic tail and thus its image is disjoint from the image of the locus of stable curves in $|H|$. Hence it is enough to prove injectivity in a single point (inside the stable locus). Since we assumed that $\Gamma$ is a sextic with a unique triple tangent, we will choose this single point to correspond to the double cover $C_0$ of this triple tangent. But now the uniqueness of this triple tangent ensures that $C_0$ (as a member of $|H|$) is unique in its isomorphism class.
\end{proof}

Thus $\psi'$ acts fibrewise and fixes the class of the theta divisor. Since the divisor $\Theta_{|2H|}$ is rigid, $\psi'$ cannot be given by translations on the fibres. Thus, again by the Torelli theorem for Jacobians, we see that $\psi'$ acts trivially on all fibres corresponding to smooth curves, that is, $\psi'$ is the identity.
\end{proof}

\section{Preliminaries for the Six-Dimensional Case}\label{sec:pre6}
In this section we give a detailed description of the geometry of a specific example of a moduli space whose Albanese fibre is a manifold of $Og_6$-type. We will use this description in the next section, and then later also in \Ref{sec}{fixed} to study the fixed locus of the automorphisms acting trivial on cohomology.

As in the ten dimensional case we will consider a relative compactified Jacobian over a non-primitive linear system, i.e.\ we start by considering a moduli space of sheaves $M(2v)$ with Mukai vector $v$ of the form $(0,H,a)$ for an effective divisor class $H$ on an abelian surface $A$ and an integer $a$. Such a moduli space comes with a fibration $\pi\colon M(2v)\rightarrow \{2H\}$ over the continuous system $\{2H\}$. Note that in the case of abelian surfaces the $A^*$-component of the Albanese map $M(2v)\rightarrow A\times A^*$ factors via $\pi$ and the natural isotrivial fibration $\{2H\}\rightarrow A^*$ with fibre the linear system $|2H|$. Thus we will work with the partial Albanese fibre over a point in $A^*$ which can be identified with the relative compactified Jacobian $\mc{J}^{a+2H^2}(|2H|)$. (The arithmetic genus of the curves in $|2H|$ is $2H^2+1$.)

Thus let $\Gamma$ be a generic curve of genus two and denote by $(A,H)$ its Jacobian together with its principal polarisation given by a symmetric theta divisor. For the Mukai vector $v=(0,H,0)$ we obtain the Jacobian $\mc{J}^4(|2H|)$. In the following we will study the fibres of the restricted fibration map $\pi\colon \mc{J}^4(|2H|) \rightarrow |2H|$ according to the stratification of the linear system $|2H|$ and finally the fibre of the restriction of the Albanese map.

An important tool in the study of the relative Jacobian is the following classical observation: The linear system $|2H|$ on $A$ defines a degree two map $f\colon A\rightarrow |2H|^\vee\cong \PP^3$. The image is the singular (quartic) Kummer surface $Kum_s(A)$. If $C$ is a curve in $|2H|$, then its image $f(C)$ is a quartic curve in $\PP^3$ (of arithmetic genus $3$). We denote by the same symbol $\iota$ the involution $-\id_A$ on $A$ and its restriction to $C$ (which is the covering involution of $C$ over $f(C)$). The surface $Kum_s(A)$ is projectively self-dual (as a quartic in $\PP^3$).

We start by recalling Rapagnetta's result (cf.\ \cite[Prop.\ 2.1.3]{Rap07}) on the stratification of $|2H|$. The strata form quasi-projective subvarieties of $|2H|$; thus we will indicate their relation to the dual of $Kum_s(A)$.

\begin{prop}
Let $C$ be a curve in $|2H|$ then $C$ belongs to one of the following strata:
\vspace{5pt}\\
\begin{tabular}{|c|c|c|}\hline
Stratum & singularity type& geometry of stratum\\\hline\hline
S& smooth& open\\\hline
N(1)&$1$ node in $A[2]$& $16$ planes\\\hline
N(2)&$2$ nodes in $A[2]$ & $120$ lines\\\hline
N(3)&$3$ nodes in $A[2]$ & $240$ points\\\hline
R(1)&reducible with two connecting nodes& $Kum_s(A)^\vee$\\\hline
R(2)&reducible with one connecting cusp& dual of $f(H)$ (trope in $Kum_s(A)$)\\\hline
D& double curve & $16$ points (nodes of $Kum_s(A)$)\\
\hline
\end{tabular}
\vspace{5pt}\\
All curves in the strata $R(1)$, $R(2)$ and $D$ are of the form $H_x\cup H_{-x}$, where $H_x$ denotes the translate of the theta divisor $H$ by $x\in A$.
\end{prop}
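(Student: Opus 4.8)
The plan is to translate everything into the geometry of the double cover $f\colon A\to |2H|^\vee\cong\PP^3$ onto the Kummer quartic $Kum_s(A)$. A curve $C\in|2H|$ corresponds to a hyperplane $H_C\subset|2H|^\vee$, and its image is the hyperplane section $f(C)=Kum_s(A)\cap H_C$, a plane quartic of arithmetic genus $3$; since $f$ is the quotient by $\iota$, étale away from $A[2]$, we recover $C=f^{-1}(f(C))$. The two local computations I would record first are: (i) if $H_C$ passes through a node of $Kum_s(A)$, that is through the image $f(e)$ of a point $e\in A[2]$, then $f(C)$ acquires a node there and, pulling back through the branched cover, $C$ acquires a node at $e$; and (ii) a tangency of $H_C$ with $Kum_s(A)$ at a smooth point makes $f(C)$ nodal at that point and forces $C$ to split into two branches.

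First I would settle the reducible and non-reduced members, which is where the final sentence of the statement lives. Because $(A,H)$ is a principally polarised Jacobian, the class $H$ is indecomposable and the theta divisor is irreducible, so in any effective splitting $2H\sim[C_1]+[C_2]$ each $C_i$ is algebraically equivalent to $H$, whence $C_i=H_{x_i}$; the theorem of the square together with $K(H)=0$ then forces $x_1+x_2=0$, so $C=H_x\cup H_{-x}$. This proves at once that every curve in $R(1)$, $R(2)$, $D$ has the asserted shape. I would then study the morphism $\beta\colon A\to|2H|$, $x\mapsto[H_x+H_{-x}]$. It factors through $A/\iota$, and by the projective self-duality of the Kummer quartic its image is the dual Kummer surface $Kum_s(A)^\vee$. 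Its three strata give precisely $R(1)$, $R(2)$, $D$: for generic $x$ the two components meet transversally in the two points of $H_x\cap H_{-x}$, the two connecting nodes of $R(1)$; along the codimension-one locus where $H_x$ and $H_{-x}$ become tangent these collapse to the connecting cusp of $R(2)$, the dual of a trope; and at the sixteen points $x\in A[2]$, where $H_x=H_{-x}$, one gets the double curves $2H_x$ of $D$, the sixteen nodes of $Kum_s(A)^\vee$.

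For the remaining, irreducible, members I would use step (i) together with a genus count. An irreducible plane quartic has at most three nodes, so an irreducible $f(C)$ can pass through at most three nodes of $Kum_s(A)$; hence an irreducible $C$ is singular exactly at the $2$-torsion points lying over those nodes, and the possibilities $0,1,2,3$ give the strata $S$, $N(1)$, $N(2)$, $N(3)$. The geometry of each is read off from the incidence of hyperplanes with the sixteen nodes: hyperplanes through one fixed node sweep out a $\PP^2$ (sixteen planes for $N(1)$), through a fixed pair a $\PP^1$ ($\binom{16}{2}=120$ lines for $N(2)$), and through a fixed non-coplanar triple a single point for $N(3)$.

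The step I expect to be the main obstacle is the count $240$ for $N(3)$, which requires controlling the coplanarities among the sixteen nodes, i.e. the classical $16_6$ configuration: the only planes meeting the node set in four or more points are the sixteen tropes, each containing exactly six nodes, and two tropes share exactly two nodes. The genus bound already shows that a plane through four nodes gives a reducible section, so such a plane cannot produce an $N(3)$ point; granting that these planes are exactly the tropes and that no triple lies on two of them, the triples that fail to give an $N(3)$ point are precisely the $16\binom{6}{3}=320$ triples contained in a trope, leaving $\binom{16}{3}-320=560-320=240$. The delicate verifications are thus (a) the configuration statement itself, which I would extract from the level-two theta structure on $A[2]$, and (b) that for each of these $240$ triples the unique spanning hyperplane meets $Kum_s(A)$ in an irreducible three-nodal quartic with no further tangency, so that it genuinely lands in $N(3)$ rather than in a deeper stratum.
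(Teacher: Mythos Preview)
The paper does not give its own proof of this proposition; it is quoted from Rapagnetta \cite[Prop.\ 2.1.3]{Rap07} as background for the later sections. Your argument via the duality $|2H|\leftrightarrow|2H|^\vee$, the self-duality of the Kummer quartic, and the $16_6$ configuration is the standard route and is essentially Rapagnetta's. The outline is sound and you have correctly isolated the two genuinely delicate verifications (the configuration statement and the check that each of the $240$ spanning planes really gives an irreducible trinodal section). One small point of exposition: your step (ii) as stated (``tangency \dots forces $C$ to split into two branches'') is only a local assertion and by itself would give $C$ a pair of nodes at the two preimages, not reducibility; the global reducibility is precisely what you establish afterwards via $\mathrm{image}(\beta)=Kum_s(A)^\vee$, and it is this identification that lets you conclude, for irreducible $C$, that $H_C$ is nowhere tangent at smooth points and hence that the only singularities of $C$ lie in $A[2]$. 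You should also record the easy local check that a generic hyperplane through a node $f(e)$ pulls back to a \emph{node} (not a cusp) of $C$ at $e$, i.e.\ transversality to the tangent cone, so that the strata $N(i)$ are genuinely open in the indicated linear spaces.
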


Let us continue by studying the fibres of $\pi$, i.e.\ the compactified Jacobians of the curves $C$ according to the strata above.

If $C$ is a stable curve, i.e.\ belonging to $S$ or $N(i)\setminus \big{(}N(i)\cap R(1)\big{)}$, then the $\pi^{-1}(C)$ has a stratification given by the partial normalizations of $C$ in the nodes. In particular, the open stratum of $\pi^{-1}(C)$ is fibred over the Jacobian $\mc{J}^4(\widetilde{C})$ of the normalization where the fibre is a product of copies of $\mathbb{C}^*$, one for every node of $C$. For further details about compactified Jacobians of stable curves we refer to the summary in \cite[Sect.\ 4.1]{Cap09} or to \cite{Kas08} for a more detailed introduction (also for cuspidal curves).

If $C$ is in $ \big{(}R(1)\cup R(2) \big{)}\setminus N(1)$, i.e.\ the union of the two distinct genus two curves $H_{\pm x}$, then its compactified Jacobian is a $\PP^1$-bundle over the product $\mc{J}^2(H_x)\times\mc{J}^2(H_{-x})$.

The structure of the fibre $\pi^{-1}(C)$ for $C\in D$ is more complicated. A good reference for sheaves on multiple curves is \cite{Dre08}. Note that every such curve $C$ is (the translate of) the double curve of $H$. There are two kinds of pure sheaves on $C$ which (as a sheaf on $A$) have Mukai vector $(0,2H,0)$. Firstly, there are sheaves which are \em concentrated on \em the reduced genus two curve $H$. These are given by semistable rank two vector bundles of first Chern class $2$ on $H$. We denote this space by $\mc{M}_H(2,2)$. It admits a natural fibration $\det\colon \mc{M}_H(2,2)\rightarrow \mc{J}^{2}(H)$ with fibre isomorphic to $\PP^3$. From the detailed description in \cite{NR69} we see that these fibres themselves are actually isomorphic to the linear system $|2H|$.\\
The second kind of sheaves are extensions of line bundles on the double curve $C$: If $\mc{L}$ is in $\mc{J}^2(H)$, then every element in $E_\mc{L}:=\PP\Ext^1_C(\mc{L},\mc{L}\otimes K_H^\vee)$ defines a semistable sheaf on the double curve $C$ with Mukai vector $(0,2H,0)$. Note that $\dim \Ext^1_C(\mc{L},\mc{L}\otimes K_H^\vee)=4$ and it contains as a codimension one subspace the extensions $\Ext^1_H(\mc{L},\mc{L}\otimes K_H^\vee)$ (as line bundles on $H$) which are, of course, sheaves of the first kind (concentrated on $H$). We obtain a fibre space $\phi_E\colon E\rightarrow \mc{J}^2(H)$ with fibres $E_\mc{L}$. It also admits a map $\det\colon E\rightarrow \mc{J}^2(H)$ which factors as $(-\otimes K_H^\vee)\circ (-)^{\otimes 2}\circ\phi_E.$\\
Altogether we see that $\pi^{-1}(C)$ has the structure of a fibre space over $\mc{J}^2(H)$ with the fibres being the union of $1+16$ $\PP^3$s where each of the $16$ meets the remaining one in a plane.

Let us finish this section by analysing the Albanese fibres of the Jacobians of nodal curves:

\begin{lem}\label{lem:conn_comp}
Let $C$ be a curve in $S$ (resp. $N(1)$, $N(2)$, $N(3)$), then the Albanese fibre of $\pi^{-1}(C)$ has one (resp. one, two, four) connected components.
\end{lem}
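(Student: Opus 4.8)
The plan is to reduce the count of connected components to a $2$-torsion computation on the abelian surface $A$. First I would observe that, since we have already restricted to a single linear system $|2H|$, the $A^*$-component of the Albanese map is constant on $\mc{J}^4(|2H|)$; hence on a single fibre $\pi^{-1}(C)$ the Albanese fibre is a fibre of the map $s\colon\pi^{-1}(C)\to A$ induced by $\mc{F}\mapsto\Sigma c_2(\mc{F})$. Using the description recalled above, the open stratum of $\pi^{-1}(C)$ is the generalised Jacobian $\mc{J}^4(C)$, which sits in an extension $1\to(\C^*)^\delta\to\mc{J}^4(C)\to\mc{J}^4(\widetilde C)\to 0$, where $\nu\colon\widetilde C\to C$ is the normalisation and $\delta$ is the number of nodes.

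The first key point is that $s$ kills the torus directions: a line bundle in $(\C^*)^\delta$ is represented by a divisor supported on the two preimages $a_i,b_i$ of each node $p_i$, and since $\nu(a_i)=\nu(b_i)=p_i$ in $A$ such a divisor maps to $0$ under $s$ (more intrinsically, any homomorphism from a torus to $A$ is trivial). Therefore $s$ factors through the map $\overline s\colon\mc{J}^4(\widetilde C)\to A$ induced by $\widetilde C\to C\hookrightarrow A$. As the curves in $S$ and $N(i)$ are integral, $\pi^{-1}(C)$ is irreducible, and the torus bundle together with the boundary of the compactified Jacobian lies connectedly over the fibres of $\overline s$; hence the number of connected components of the Albanese fibre equals that of a general fibre of $\overline s$.

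For a surjection of abelian varieties the number of connected components of a fibre equals $|\mathrm{coker}(\overline s_*\colon H_1(\widetilde C,\Z)\to H_1(A,\Z))|$, which by duality equals $|\ker(\overline s^\vee\colon A^*\to\mc{J}(\widetilde C))|$, the group of degree-zero line bundles on $A$ restricting trivially to $\widetilde C$. Because $\widetilde C$ generates $A$ this kernel is a finite $2$-group, and since the embedding $C\hookrightarrow A$ is equivariant for $\iota=-\id_A$ and the nodes lie in the fixed locus $A[2]$, only the symmetric bundles $\mc{L}_\alpha$, $\alpha\in A[2]$, can occur. The heart of the argument is to identify this kernel with the subgroup of $A[2]$ generated by the differences $p_i-p_j$ of the nodes: a lone node contributes nothing, while each further node in independent position doubles the kernel. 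Using Rapagnetta's stratification — where the configurations in $N(2)$ ($120$ lines) and $N(3)$ ($240$ points) force any two nodes to be independent in $A[2]$ — this subgroup has order $2^{\max(\delta-1,0)}$, namely $1,1,2,4$ for $\delta=0,1,2,3$.

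The main obstacle is precisely this last identification: showing that $\mc{L}_\alpha|_{\widetilde C}$ is trivial exactly when $\alpha\in\langle p_i-p_j\rangle$. I expect to prove this by comparing, for $\alpha=p_i-p_j$, the two degree-four divisors $\widetilde C\cap\Theta_\alpha$ and $\widetilde C\cap\Theta$ cut out by the symmetric theta divisor and its translate, and showing that they become linearly equivalent on $\widetilde C$ precisely because the nodes lie on the relevant theta translates; the ``$-1$'' in the exponent reflects that $s$ only sees the relative positions of the nodes, not their absolute classes. An alternative, perhaps cleaner, route is to invoke the polarisation type of the Prym variety of the ramified double cover $\widetilde C\to\widetilde C/\iota$, which always has dimension $2$ and maps to $A$ by an isogeny of the predicted degree.
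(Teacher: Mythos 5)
Your reduction to the torus $\ker(\hat A\to \mc{J}^0(\widetilde C))$ via the cokernel of $H_1(\widetilde C,\Z)\to H_1(A,\Z)$ is a legitimate and genuinely different starting point from the paper's, and the predicted orders $1,1,2,4$ do match. But the argument has a real gap exactly where you flag it: the identification of this kernel with the subgroup of $A[2]$ generated by the differences of the nodes is the entire content of the lemma, and you only conjecture it and sketch two ways one might attack it. Neither sketch holds up as stated. First, the claim that the kernel consists of symmetric (hence $2$-torsion) bundles ``because $C\hookrightarrow A$ is $\iota$-equivariant'' proves nothing: $\iota^*$ acts on $\Pic^0(A)$ as $\mc{L}\mapsto\mc{L}^{-1}$, so stability of the kernel under $\iota^*$ is automatic for any subgroup and does not force $2$-torsion. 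Second, your ``cleaner alternative'' via the Prym $Z$ of $\widetilde C\to\widetilde C/\iota$ misstates the key number: $Z\to A$ is \emph{not} an isogeny of degree $1,1,2,4$. The paper (citing Birkenhake--Lange) computes its degree to be $16,16,8,4$ in the four strata, and the component count is that degree divided by $\lvert Y\cap Z\rvert$, where $Y$ is the image of $\mc{J}(\widetilde{f(C)})$ in $\mc{J}(\widetilde C)$ ($Y\cap Z=Z[2]$ for $S$ and $N(1)$, $=Y[2]$ for $N(2)$, trivial for $N(3)$). That quotient by $Y\cap Z$ is precisely the subtle point, and omitting it gives the wrong answer in three of the four cases.

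Two smaller issues: your passage from the compactified fibre $\pi^{-1}(C)$ to the generic fibre of $\overline s$ on $\mc{J}^4(\widetilde C)$ asserts that the boundary ``lies connectedly over'' the components; in fact the boundary can glue distinct components of the open Albanese fibre (this is exactly what produces the $I_2$-configurations discussed later in the paper), so the statement should be understood on the open Jacobian as the paper does, and your phrasing needs adjusting. Finally, even granting the orders, your claim pins down the component group as elementary abelian $(\Z/2\Z)^2$ in the $N(3)$ case; that is not established by any counting argument and would have to come out of the same unproven identification. To repair the proof along your lines you would need to actually prove that $\mc{L}_{p_i-p_j}$ restricts trivially to $\widetilde C$ (your theta-translate comparison), or else fall back on the paper's route through the polarisation types of the Pryms.
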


\begin{proof}
We will use the natural construction given by the Kummer involution. Let $C$ be a curve inside $S\cup N(1)\cup N(2)\cup N(3)$ and let $f(C)$ be its image under the quotient map $f\colon A\rightarrow Kum_s(A)$. Let $\widetilde{C}$ and $\widetilde{f(C)}$ be the normalisations with induced degree two map $\tilde{f}\colon\widetilde{C}\rightarrow\widetilde{f(C)}$. This map ramifies in $2\times(\text{number of nodes of }C)$ points. It naturally induces a map $\tilde{f}^*$ sending $J(\widetilde{f(C)})$ in $J(\widetilde{C})$, with image $Y$. Let $Z$ be the kernel of the dual map $J(\widetilde{C})\rightarrow J(\widetilde{f(C)})$.\\
Now, let $a$ be the Albanese map $J(C)\rightarrow A$. This map is trivial on the $\mathbb{C}^{*}$ bundle structure, hence to understand the structure of $a^{-1}(0)$ we can work on $\widetilde{C}$ and its induced Albanese map, which we still call $a$. Since the composition of $a$ with $\tilde{f}^*$ on $J(\widetilde{f(C)})$ is trivial, the subvariety $Y$ satisfies $a(Y)=0$. Therefore the Stein factorisation of $a\colon J(\widetilde{C})\rightarrow A$ is given by $J(\widetilde{C})\rightarrow Z'$ composed with an isogeny $Z'\rightarrow A$. Therefore, the number of connected components of $a^{-1}(0)$ equals the degree of this isogeny. Notice that here $Z'$ is given by $Z/(Y\cap Z)$, as all these points are sent to $0$. Therefore the number of connected components of $a^{-1}(0)$ equals the degree of the map $Z\rightarrow A$ divided by the cardinality of $Y\cap Z$.\\
A special case of this setting is analysed in \cite[Theorem 12.3.3]{BL92}: if $C\in S\cup N(1)$, the variety $Z$ is principally polarised and the map $Z\rightarrow A$ has degree $16$. Moreover, $Y\cap Z=Z[2]$, therefore $a^{-1}(0)$ consists of a single connected component. If $C$ lies in $N(2)$, $Z$ has a polarisation of type $(1,2)$ by \cite[Corollary 12.1.5 and Lemma 12.3.1]{BL92} and the map $b\colon Z\rightarrow A$  satisfies $\Theta_{\widetilde{C}}|_Z=2\Theta_{Z}=b^{*}(\Theta_A)$, therefore it has degree $8$. In this case, we have $Y\cap Z=Y[2]$ (since $\widetilde{f(C)}$ is an elliptic curve), therefore $a$ has degree $2$. The final case, when $C\in N(3)$, was already analysed in \cite[Proof of Prop.\ 2.1.4]{Rap07}. Here $Y$ is a point and the map $a$ has degree $4$.
\end{proof}

\section{The Six-Dimensional Case}\label{sec:og6}

Now we return to the original question about automorphisms on manifolds of $Og_6$-type. We start by the following observation which shows that the situation is somewhat more complicated than in the ten-dimensional case.
\begin{lem}
Let $G_0$ be the group generated by points of order $2$ in $A\times A^*$. Then we have an induced action of $G_0$ on $\widetilde{K}(2v)$ which acts trivially on $H^2$.
\end{lem}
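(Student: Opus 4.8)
The plan is to construct the $G_0$-action explicitly and then invoke the deformation-invariance of $\ker\nu$ together with the known description of the cohomology of $\widetilde{K}(2v)$. First I would recall that $A\times A^*$ acts on the moduli space $M(2v)$ by the two natural geometric operations: a point $(x,\xi)\in A\times A^*$ acts on a sheaf $\mc{F}$ by translation $t_x^*\mc{F}$ followed by twisting with the degree-zero line bundle corresponding to $\xi$, i.e.\ $\mc{F}\mapsto t_x^*\mc{F}\otimes P_\xi$. The crucial point is the behaviour of this action on the Albanese map $alb\colon M(2v)\rightarrow A\times A^*$: translation by $x$ shifts the $\Sigma c_2$-component by (a multiple of) $x$ and tensoring by $P_\xi$ shifts the $\det$-component by $\xi$. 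One checks that restricting to the subgroup $G_0=(A\times A^*)[2]$ of $2$-torsion points, the induced translation on the base $A\times A^*$ is by a $2$-torsion point, but after rescaling by the appropriate multiplicities coming from $v=(0,H,a)$ (so that $2v=(0,2H,2a)$) these $2$-torsion translations on the base become trivial. Hence $G_0$ preserves the Albanese fibre $K(2v)=alb^{-1}(0,0)$, and because the resolution $\widetilde{K}(2v)\rightarrow K(2v)$ is canonical (the symplectic resolution of the singular locus), the action lifts to the resolution.

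Once the action on $\widetilde{K}(2v)$ is in place, the second half is to show it acts trivially on $H^2$. The natural route is via the Hodge isometry of the preliminary theorems: $H^2(\widetilde{K}(2v),\Z)$ is controlled by $v^\perp\subset\widetilde{H}^*(A,\Z)$ restricted to the fibre, together with the exceptional classes of the resolution. The translations $t_x$ and twists by $P_\xi$ act trivially on the whole Mukai lattice $\widetilde{H}^*(A,\Z)=U^4$ because automorphisms induced by translations on an abelian surface act trivially on $H^*(A,\Z)$ (translation is homotopic to the identity) and tensoring by a degree-zero line bundle does not change Chern characters modulo the Mukai pairing in the relevant way; so the induced action on $v^\perp$, and hence on the corresponding summand of $H^2$, is trivial. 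The exceptional divisor classes are likewise permuted, but being the finitely many irreducible components of a canonically defined divisor they must each be fixed, since a $2$-torsion group acting continuously in families cannot permute rigidly-attached exceptional classes nontrivially without being detected on the singular model. Combining these, $\nu(g)=\id$ for every $g\in G_0$.

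The main obstacle I anticipate lies in making the restriction-to-the-fibre argument precise, i.e.\ in verifying that the $2$-torsion translations genuinely fix the Albanese fibre rather than merely permuting finitely many fibres, and in checking that the lifted action on the resolution $\widetilde{K}(2v)$ is well defined and acts trivially on the \emph{exceptional} part of $H^2$ (which is not visible in the Mukai lattice of $A$). The former is a bookkeeping computation with the multiplicities in $alb$ for the doubled Mukai vector $2v$; the latter requires knowing that the singular locus of $M(2v)$, and hence the exceptional locus of the symplectic resolution, is preserved componentwise by $G_0$. For this I would argue that $G_0$ acts compatibly on the singular symplectic variety $K(2v)$, that its finite action must preserve the (finitely many) irreducible components of the singular locus, and then use that a symplectic resolution is functorial for automorphisms to conclude the lifted action fixes each exceptional class. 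The remaining verification that the action on the non-exceptional summand is trivial is then immediate from the triviality of translations and Poincar\'e-bundle twists on $H^*(A,\Z)$.
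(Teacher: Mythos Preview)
Your approach is correct and essentially matches the paper's: construct the natural $A\times A^*$-action on $M(2v)$, verify that $2$-torsion preserves the Albanese fibre, lift to the resolution, and read off triviality on $H^2$ from the Mukai lattice together with the exceptional class. The paper sharpens your ``bookkeeping'' step by using Yoshioka's twisted action through the homomorphism $\tau_v\colon A\times A^*\to A\times A^*$ rather than the naive $(x,\xi)\cdot\mc{F}=t_x^*\mc{F}\otimes P_\xi$; the crucial point is that for the doubled vector one has $\tau_{2v}=2\tau_v$, so acting via $\tau_v$ on $M(2v)$ produces an Albanese shift of exactly $(2x,\mc{L}^{\otimes 2})$, which visibly vanishes on $2$-torsion. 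For the triviality on $H^2$ and the extension of the action across the symplectic resolution (your two acknowledged obstacles, especially the exceptional-class part, where your deformation argument is not quite a proof) the paper does not argue from scratch but cites specific results from \cite{MW14}.
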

\begin{proof}
For any Mukai vector $v=(r,l,a)$ we have an induced action of $A\times A^*$ on $M(v)$. It has been described by Yoshioka in \cite{Yos99} (cf.\ diagram (1.8)). First he defines the following map
\begin{eqnarray*}
\tau_v\colon A\times A^*&\rightarrow & A\times A^*,\\
(x,\mc{L})&\mapsto & (x',\mc{L}'):=(rx-\hat{\phi}_l(\mc{L}),-\phi_l(x)-a\mc{L}),
\end{eqnarray*}
where $\phi_l\colon A\rightarrow A^*$ and $\hat{\phi}_l\colon A^*\rightarrow A$ are defined as usual (e.g. $\phi_l(x):=t_x^*\mc{N}\otimes\mc{N}^\vee$ for some $\mc{N}$ with $c_1(\mc{N})=l$). Yoshioka then defines an action on $M(v)$ as follows:
\begin{eqnarray*}
\Phi\colon A\times A^*\times M(v)&\rightarrow & M(v),\\
(x,\mc{L},\mc{F})&\mapsto & t_{x'}^*\mc{F}\otimes \mc{L}'.
\end{eqnarray*}
The introduction of $\tau_v$ has the advantage that
\[ alb(\Phi(x,\mc{L},\mc{F}))=alb(\mc{F})+(nx,\mc{L}^{\otimes n}),\]
where $n:=l^2/2-ra=v^2/2.$

Now, the crucial point in our situation is that since our Mukai vector is non-primitive, we have $\tau_{2v}=2\tau_v$ and we define an action $\Phi'$ as above on $M(2v)$ using $\tau_v$ instead of $\tau_{2v}$. Since $(2v)^2/2=4,$ we see that $alb(\Phi(x,\mc{L},\mc{F}))=alb(\mc{F})+(2x,\mc{L}^{\otimes 2})$ and can immediately deduce that the action of $G_0$ preserves the Albanese fibres if $x$ and $\mc{L}$ are two-torsion. The action on $H^2(K(2v),\Z)$ can be computed via Lemma 1.34 of \cite{MW14} and is easily seen to be trivial. The group $G_0$ certainly preserves the singular locus of both $M(2v)$ and $K(2v)$ and the action extends naturally to an action on the desingularisation (cf.\ the description of the normal bundle of the singular locus in \cite[Prop.\ 4.3]{MW14}.
\end{proof}

Thus we have $G_0\subseteq \ker\nu$. The converse is also true:
\begin{thm}\label{thm:thm_og6}
Let $X$ be a manifold of $Og_6$-type. Then the kernel of the cohomological representation
\[\nu\colon \aut(X)\rightarrow O(H^2(X,\Z))\]
is isomorphic to $G_0:=\langle A[2], A^*[2]\rangle\cong (\Z/2\Z)^{\times8}.$
\end{thm}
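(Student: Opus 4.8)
The plan is to prove the reverse inclusion $\ker\nu\subseteq G_0$ by exploiting the deformation invariance of $\ker\nu$ (\Ref{thm}{thm_HT}) to reduce to the explicit model $X=\widetilde{K}(2v)$ with $v=(0,H,0)$ studied in \Ref{sec}{pre6}, and then by running the same two-step strategy as in the ten-dimensional case: first show every $\psi\in\ker\nu$ acts fibrewise over the linear system $|2H|$, and second pin down the fibrewise action using the Torelli theorem for Jacobians together with the group-theoretic structure of the automorphisms in each fibre. Let $\psi\in\ker\nu$. As in \Ref{thm}{thm_og10}, since $\psi$ fixes the class of the exceptional divisor of the resolution $\widetilde{K}(2v)\to K(2v)$ and the class of $\pi^*\mc{O}(1)$, it descends to $\psi'$ on $K(2v)$ commuting with the fibration $\pi\colon K(2v)\to|2H|$; thus $\psi'$ permutes the fibres over $|2H|$ respecting the stratification of \Ref{prop}{} above, and acting trivially on $H^2$ it must fix each stratum, hence act fibrewise (the stratum $S$ being dense and $\psi'$ fixing the base pointwise because it acts trivially on $H^2(|2H|)\subset H^2$).

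Next I would analyse the fibrewise action on a general smooth fibre $\pi^{-1}(C)\cap K(2v)$. Over the stratum $S$ the relevant Albanese fibre is connected by \Ref{lem}{conn_comp}, and it sits inside the Jacobian $\mc{J}^4(C)$ of the smooth genus-five curve $C\in|2H|$. The automorphism $\psi'$ restricts to an automorphism of this abelian-variety-torsor fixing the polarisation class (the theta divisor class is fixed because $\psi$ acts trivially on cohomology, exactly as the rigidity argument in \Ref{lem}{lem_og10_rigid} shows for the relative theta divisor). By the Torelli theorem for Jacobians the only automorphisms of $\mc{J}^4(C)$ preserving the principal polarisation and the Albanese-fibre structure are compositions of the translations by the relevant torsion points and the automorphisms induced by automorphisms of $C$ itself; for a generic curve $C$ the latter are exhausted by $\pm\id$, which on the abelian surface side corresponds precisely to the already-identified induced action of $G_0$. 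The key point is that the translations realising elements of $\ker\nu$ must be by two-torsion in $A\times A^*$ in order to preserve the Albanese fibre, matching the computation $alb(\Phi'(x,\mc{L},\mc{F}))=alb(\mc{F})+(2x,\mc{L}^{\otimes2})$ from the preceding lemma; this forces $\psi'|_{\pi^{-1}(C)}$ to lie in the group generated by $G_0$ and the covering involution $\iota=-\id$, which is already contained in $G_0$ by the defining lemma.

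The main obstacle I expect is the passage from a pointwise (fibre-by-fibre) description to a global identification of $\psi'$ with a single element of $G_0$. A priori the fibrewise analysis only shows that over each smooth fibre $\psi'$ agrees with \emph{some} element of $G_0$, and one must rule out that this element varies from fibre to fibre; this requires a rigidity/continuity argument, using that $G_0$ is finite and discrete while the family of fibres over the connected dense stratum $S$ is connected, so the assignment $C\mapsto(\text{element of }G_0)$ is locally constant, hence constant. A second delicate point is verifying that no fibrewise automorphism of a general Jacobian that acts trivially on $H^2(X,\Z)$ can avoid $G_0$ --- in particular that the relative theta divisor is genuinely fixed (not merely its cohomology class), which again follows from a rigidity statement analogous to \Ref{lem}{lem_og10_rigid} and \Ref{lem}{lem_rigid}. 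Once constancy is established, $\psi'$ coincides globally with a fixed $g\in G_0$ on the dense open locus $\pi^{-1}(S)$, hence $\psi'=g$ on all of $K(2v)$ by density and continuity, and this lifts back to $\psi=g\in G_0$ on the resolution $X=\widetilde{K}(2v)$, completing the inclusion $\ker\nu\subseteq G_0$ and therefore the theorem.
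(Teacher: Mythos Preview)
Your overall architecture matches the paper's, but there is a genuine gap in the first step that makes the argument fail as written.

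You assert that $\psi'$ ``fixes the base pointwise because it acts trivially on $H^2(|2H|)\subset H^2$''. This is false: every automorphism of $\PP^3$ acts trivially on $H^2(\PP^3,\Z)\cong\Z$, so triviality on cohomology of the base imposes no constraint whatsoever on the induced action on $|2H|$. Consequently you have not established that $\psi'$ acts fibrewise. The paper handles this point quite differently: it observes that $\psi'$ must preserve the stratification of $|2H|$ (since the fibre type is preserved), and in particular it preserves the closure of the stratum $R(1)$, which is the singular Kummer surface $Kum_s(A)$ embedded in $|2H|$, together with its $16$ nodes (the stratum $D$). An automorphism of $Kum_s(A)$ preserving the nodes lifts to $A$ and there must be translation by a point of $A[2]$. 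So the induced action on $|2H|$ is not trivial in general but lies in $A[2]$; one then composes $\psi'$ with the appropriate $t_a\in A[2]\subset G_0$ to \emph{reduce} to the fibrewise case. This is exactly the mechanism by which the $A[2]$-factor of $G_0$ enters the proof, and your shortcut bypasses it entirely.

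Two smaller points. First, your remark that ``$\iota=-\id$ \dots\ is already contained in $G_0$'' is not how the paper disposes of this case: the involution $-\id_A$ acts trivially on $K(2v)$, so it is not a candidate automorphism at all; what must be excluded is the fibrewise $-\id$ on a generic $\mc{K}^4(C)$, and the paper does this by noting it would be non-symplectic on $X$. Second, once fibrewise, the paper identifies the allowed translations not by appealing to the Albanese constraint you cite but by showing (via rigidity of the restricted relative Theta divisor, the analogue of \Ref{lem}{lem_og10_rigid}) that the translation point $x$ lies in the kernel $K(i^*\Theta_C)$ of the induced polarisation on $\mc{K}^4(C)$, and then computing $K(i^*\Theta_C)\cong A[2]$. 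Your continuity argument for globalising the fibrewise identification is correct and is indeed needed, though the paper leaves it implicit.
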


\begin{proof}
We consider the manifold $X$ which is obtained as the resolution of the Albanese fibre of the relative compactified Jacobian $\mc{J}^4(|2H|)$ from the previous section. 

Let $\psi$ be an automorphism of $X$ acting trivially on cohomology. Let us prove $\psi\in G_0$. Again, $\psi$ descends along the blow down to an automorphism of $\mc{K}^4(|2H|)$ which we will denote by the same symbol. Also, the fibration $\pi\colon \mc{K}^4(|2H|)\rightarrow |2H|$ is $\psi$-equivariant and this time we can prove directly that (up to the action of $A[2]$) $\psi$ is, in fact, preserving the fibres of $\pi$: The detailed analysis of the fibres of $\pi$ in the previous section show that $\psi$ must preserve the stratification of $|2H|$.

\begin{lem}
Any automorphism of $|2H|\cong \PP^3$ preserving its stratification (by analytical type of the singularities of the curves) is, in fact, induced by translation of a point in $A[2]$.
\end{lem}
\begin{proof}
Any such automorphism induces an automorphism of the closure the stratum $R(1)$ which is isomorphic to $Kum_s(A)$. Furthermore $D$ corresponds to its $16$ nodes. Thus we deduce that we get an automorphism of $Kum_s$ preserving the set of nodes. Such an automorphisms can be lifted to the abelian surface $A$ and has to act there as translation by a two-torsion point. 
\end{proof}


Composing with the translation of an appropriate element in $A[2]$, we may thus assume that the action on $|2H|$ is trivial, that is, for all generic smooth $C\in|2H|$ we obtain an automorphism of $\mc{K}^4(C)$. We continue by the analogue of \Ref{lem}{lem_og10_rigid} 
\begin{lem}
The restriction $\mc{D}:=\Theta\cap \mc{K}(2v)$ of the relative Theta divisor to $\mc{K}(2v)$ is an effective rigid divisor.
\end{lem}
\begin{proof}
On a general fibre $\mc{K}^4(C),$ the divisor $\mc{D}$ is given by
\[ D_C=i^*\Theta_C=\{ \mc{O}(p+\iota(p)+q+\iota(q))\mid p,q \in C\}.\]
We can thus define a rational map
\begin{eqnarray*}
\mc{D}&\dashedrightarrow& Sym^2(Kum_s),\\
\mc{O}(p+\iota(p)+q+\iota(q))&\mapsto & p+q.
\end{eqnarray*}
The fibre over $p+q$ consists of curves in $|2H|$ that pass through $p$ and $q$, hence is a $\PP^1$. Thus, by \Ref{lem}{lem_rigid} we deduce that $\mc{D}$ is, in fact, rigid.
\end{proof}

\begin{rmk}
A divisor similar to $\mc{D}$ above appeares in the Main Theorem of \cite{Nag13}.
\end{rmk}

Thus (up to the action of $A[2]$) any automorphism $\varphi$ in $\ker\nu$ induces a non-trivial automorphism $\varphi$ of $\mc{K}^4(C)$ preserving a divisor in $i^*|\Theta_C|$. 

Note that our automorphism $\varphi$ cannot be given by $-\id$ on a generic fibre $\mc{K}^4(C)$ because this would yield a non-symplectic automorphism of $\widetilde{K}^4(|2H|)$ (which would not act trivially on the second cohomology). Furthermore, since $\varphi$ is of finite order (\Ref{prop}{prop_huy}) and generically $\mc{K}^4(C)$ is a simple abelian variety, it then has to be given by the translation $t_x$ by a point $x\in\mc{K}^4(C)$ of finite order. By the lemma above, $t_x$ preserves a divisor in $|i^*\Theta_C|$. Hence $t_x^*\mc{O}(i^*\Theta_C)\cong\mc{O}(i^*\Theta_C)$ and therefore $x$ is in the kernel of the map $\phi_{i^*\Theta_C}$ associated with the polarisation (usually denoted by $K(i^*\Theta_C)$). But it is well-known (cf.\ \cite[Sect.\ 2]{LP09}) that 
\[K(i^*\Theta_C)\cong A\cap\mc{K}^4(C)=\{x=-x=\iota^*x=-\iota^*x\}=A[2].\qedhere\]
\end{proof}


\section{The Fixed Locus}\label{sec:fixed}
Let us end the discussion by studying the automorphisms in $\ker\nu$ in more detail. We start by analysing automorphisms in the subgroup $A[2]$.

\begin{prop}\label{prop:fixed_ta}
Let $a\in A[2]\setminus{\{0\}}$ be a two-torsion point and $X$ a manifold of $Og_6$-type. Then the fixed locus of the induced action of $t_a$ on $X$ as described above consists of the disjoint union of $16$ K3 surfaces.
\end{prop}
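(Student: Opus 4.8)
The plan is to describe $\mathrm{Fix}(t_a)$ by \emph{descent} along the isogeny determined by $a$, while exploiting that $t_a$ is a symplectic automorphism. First I would record the structural constraint coming from the holomorphic symplectic form: since $t_a\in\ker\nu$ it acts trivially on $H^2(X,\C)$, hence on $H^{2,0}(X)\subset H^2(X,\C)$, so it preserves the symplectic form. Therefore $\mathrm{Fix}(t_a)$ is a smooth (possibly disconnected) submanifold whose connected components are themselves holomorphic symplectic, in particular even-dimensional and with trivial canonical bundle. In dimension two this already forces each positive-dimensional component to be a K3 or an abelian/bielliptic surface, so the problem reduces to (i) computing the dimension, (ii) counting the components, and (iii) ruling out the abelian/bielliptic alternative.

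For the geometric identification I would work on the explicit model $\mc{K}^4(|2H|)$ and use that $t_a$ is $\pi$-equivariant, acting on the base $|2H|=\PP(H^0(A,2H))\cong\PP^3$ through the theta (Heisenberg) group. A nontrivial two-torsion element has trace zero in the Schr\"odinger representation, so its $\pm1$-eigenspaces are both two-dimensional and $\mathrm{Fix}(t_a)$ lies over the union of the two corresponding disjoint lines. Away from the singular strata a point of $\mc{K}^4(|2H|)$ is a pair $(C,\mc{L})$ with $C\in|2H|$ and $\mc{L}\in\Pic(C)$ in the relevant Albanese fibre, and it is fixed precisely when $t_aC=C$ and $t_a^*\mc{L}\cong\mc{L}$. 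The second condition says that $\mc{L}$ descends along the \'etale double cover $p\colon C\to\bar C:=C/\langle a\rangle$ induced by the isogeny $\psi_a\colon A\to B:=A/\langle a\rangle$. Thus I would set up a descent/pushforward correspondence identifying the $t_a$-fixed sheaves on curves in $A$ with sheaves on the abelian surface $B$ supported on the descended curves $\bar C$, i.e.\ with (a union of components of) the Albanese fibre of a moduli space $M_B(\bar v)$ of rank-one sheaves. A Riemann--Roch/genus computation ($g(C)=5$, $g(\bar C)=3$) gives $\bar v^2=4$, so this Albanese fibre is two-dimensional.

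A two-dimensional \issp (Kummer-type) moduli space is a K3 surface, which settles (i) and (iii) at once: each component is a smooth projective surface carrying a symplectic form and is identified, through the descent correspondence, with a generalised Kummer surface $K_1(B)=\Km(B)$, hence is K3 and not abelian. For the count (ii) I would run the isogeny/degree bookkeeping exactly as in the proof of \Ref{lem}{conn_comp}: the descent of a line bundle is not unique but is a torsor under the finite group measuring the difference between $J(C)^{t_a^*}$ and its image from $J(\bar C)$, and the resulting ambiguity produces sixteen disjoint sheets, matching the degree-$16$ isogeny that already appears there. Since $\mathrm{Fix}(t_a)$ is smooth its components are automatically disjoint, and the dimension count rules out lower-dimensional or isolated fixed components, leaving sixteen disjoint K3 surfaces.

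The step I expect to be the main obstacle is making the descent correspondence precise and global, in particular controlling it over the singular strata of $|2H|$ (reducible, cuspidal and double curves) and across the symplectic resolution $X\to K(2v)$, where the naive "line bundle on a smooth curve" picture breaks down and one must argue with pure sheaves and with the exceptional divisor. Establishing that exactly sixteen components survive — neither fewer (components colliding in degenerate fibres) nor more (spurious fixed loci created by the resolution) — and that each closes up to a smooth compact K3, is the delicate part; it is precisely the symplectic smoothness of $\mathrm{Fix}(t_a)$ recorded at the outset that makes this bookkeeping rigid enough to pin down the answer.
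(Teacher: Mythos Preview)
Your strategy is essentially the paper's: work on the explicit model $\mc{K}^4(|2H|)$, identify $t_a$-fixed sheaves as pullbacks from the quotient $B=A/\langle a\rangle$, note that the fixed locus in $|2H|$ is two disjoint lines, and recognise the resulting two-dimensional pieces as generalised Kummer (hence K3) surfaces over the quotient linear systems on $B$. The paper also invokes deformation invariance of $\mathrm{Fix}(t_a)$ at the outset, which lets it avoid the stratum-by-stratum and resolution analysis you flag as the main obstacle; that finer analysis is relegated to a subsequent remark and is not needed for the proposition itself.

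The one place where your plan wobbles is the count of $16$. Attributing it to the torsor measuring the gap between $J(\bar C)\to J(C)^{t_a^*}$, or to the degree-$16$ isogeny of \Ref{lem}{conn_comp}, does not give the right bookkeeping: for an \'etale double cover that cokernel has order $2$, and the $16$ in \Ref{lem}{conn_comp} plays a different role. The paper's count is cleaner and purely on the Albanese side: a sheaf $q_a^*\mc{G}$ lies in the Albanese fibre over $0\in A$ if and only if $alb(\mc{G})\in q_a(A[2])\cong A[2]/\langle a\rangle$, a set of $8$ points; each such Albanese fibre on $B$ (over either of the two quotient linear systems) is a single Kummer K3, so one obtains $2\times 8=16$ disjoint K3 surfaces. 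Replacing your torsor heuristic with this Albanese compatibility makes the argument go through.
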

\begin{proof}
The automorphism $t_a$ deforms with all deformations of $X$ and its fixed locus is a deformation invariant. Therefore we might assume that $X$ is given as the desingularisation of the Albanese fibre $\mc{K}^4(|2H|)$ of the relative compactified Jacobian (of degree four and genus five) as in the last sections. Now, let us start by analysing the fixed locus of the action of $t_a$ on $\mc{J}^4(|2H|)$. We follow the ideas of Oguiso in the case of generalised Kummer varieties (cf.\ \cite[Prop.\ 3.6]{Ogi12}). A sheaf $\mc{F}$ on $A$ is fixed by $t_a$ if and only if it is a pullback from the quotient $A/\langle a\rangle$. Thus the fixed locus of the Jacobian $\mc{J}^4(|2H|)$ is isomorphic to the Jacobian of degree two over the quotient linear systems (of genus three) on $A/\langle a\rangle$. These linear systems correspond to the fixed point locus of the induced action of $t_a$ on $|2H|$. The restriction of this action to the dual singular Kummer is, of course, just given by translation of a two-torsion point. In particular it has (on the Kummer) precisely $8$ fixed points. Thus we conclude that the fixed locus of $|2H|$ consists of two distinct lines, say $l_1$ and $l_2$.

Now we intersect with the Albanese fibre $\mc{K}^4(|2H|)$. Observe that a sheaf of the form $q_a^*\mc{G}$ (where $q_a\colon A\rightarrow A/\langle a\rangle$ denotes the quotient) is in the Albanese fibre over $0$ if and only if $\mc{G}$ is in the Albanese fibre over a point in $q_a(A[2])\cong A[2]/\langle a\rangle$. (Note that $|A[2]/\langle a\rangle|=8$.) These fibres are surely all isomorphic. Thus we consider only the fibre over $0$. The Albanese fibre of the Jacobian over a quotient linear system ($l_1$ or $l_2$) is a generalised Kummer variety of dimension two. In particular, it is an elliptic K3 surface fibred over the linear system. 
\end{proof}

\begin{rmk}
Let us study the fibre structure of the elliptic fibration above in more detail. The special fibres correspond to the intersections of $l_1$ (and $l_2$ resp.) with the strata $N(2)$ and $R(1)$. (Note that for symmetry reasons there is no intersection with $N(1)$ and $N(3)$ and furthermore we have no intersection with $D$ because $D$ corresponds to the nodes of the dual Kummer and $t_a$ acts transitively on the set of nodes.) Now, if we fix a node $y\in A[2]$, then the line in $N(2)$ corresponding to curves with nodes both in $y$ and $y+a$ is preserved by $t_a$ and we have two fixed points corresponding to the intersections with $l_1$ and $l_2$. There are eight such lines. Now, the fixed locus is isomorphic to the Jacobian of the quotient curve, i.e.\ a curve of arithmetic genus three with one node. Its non-compactified Jacobian is a $\C^*$-bundle. By \Ref{lem}{conn_comp} the Albanese fibre of the Jacobian of a curve in $N(2)$ has two connected components. The compactification of the Jacobian glues the two copies of $\C^*$ two a single $I_2$-fibre. Thus we altogether obtain $8$ $I_2$-fibres. This settles the intersection of $N(2)$ and $l_i$.\\
The stratum $R(1)$ is isomorphic to the singular quartic Kummer surface associated with $A$ and the action $t_a$ induces a symplectic involution. It must therefore have eight fixed points. They correspond to the intersection points of $R(1)$ with $l_1$ (and $l_2$; four points each). These intersection points correspond to curves of the form $H_x+H_{-x}$ where $2x=a$. This time the quotient curve is isomorphic to $H$ with two points joined to a node (the images of the intersection $H_x\cap H_{-x}$). Thus its (non-compactified) Jacobian is a $\mathbb{C}^*$-bundle over $\mc{J}^2(H)$. Now, the boundary of this Jacobian is contained in the singular locus of $\mc{J}^4(|2H|)$ and thus each point is replaced by a $\PP^1$ when passing to the resolution. We see that the special fibre in this case is a $I_2$. (The $\mathbb{C}^*$ is compactified to a $\PP^1$ which meets the exceptional curve in two points.)

Thus we conclude that all the $16$ fixed K3 surfaces are, in fact, elliptic K3s with $12$ $I_2$-fibres.
\end{rmk}

Next, we consider automorphisms in the subgroup $A^*[2]$:

\begin{prop}\label{prop:fixed_l}
Let $\mc{L}\in A^*[2]$ be a non-trivial two-torsion line bundle on $A$ and $X$ a manifold of $Og_6$-type. Then the fixed point locus of the induced action of $\mc{L}$ on $X$ is the disjoint union of $16$ K3 surfaces.
\end{prop}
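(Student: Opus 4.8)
The plan is to dualise the argument of \Ref{prop}{fixed_ta}, replacing the degree-two quotient by an étale cover and translation by tensorisation. As before, the automorphism $\varphi_{\mc{L}}\colon\mc{F}\mapsto\mc{F}\otimes\mc{L}$ deforms with $X$ and its fixed locus is a deformation invariant, so I may assume that $X$ is the desingularisation of the Albanese fibre $\mc{K}^4(|2H|)$ of \Ref{sec}{pre6}. The key is the exact dual of the description used there: a pure sheaf $\mc{F}$ on $A$ satisfies $\mc{F}\otimes\mc{L}\cong\mc{F}$ if and only if it is a pushforward $\pi_*\mc{G}$ along the connected étale double cover $\pi\colon\widetilde{A}\rightarrow A$ determined by $\mc{L}\in A^*[2]$. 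One direction is the projection formula, $\pi_*\mc{G}\otimes\mc{L}\cong\pi_*(\mc{G}\otimes\pi^*\mc{L})\cong\pi_*\mc{G}$, using $\pi^*\mc{L}\cong\mc{O}_{\widetilde{A}}$; conversely an isomorphism $\mc{F}\otimes\mc{L}\cong\mc{F}$ is a descent datum for $\pi$ and exhibits $\mc{F}$ as such a pushforward.

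This identifies the fixed locus of $\varphi_{\mc{L}}$ on $\mc{J}^4(|2H|)$ with part of a moduli space of sheaves on $\widetilde{A}$, which is again abelian and carries the polarisation $\widetilde{H}:=\pi^*H$ of type $(1,2)$, since $\widetilde{H}^2=2H^2=4$. A line bundle $\mc{G}$ on a curve $\widetilde{C}\subset\widetilde{A}$ pushes forward to a rank-one sheaf on $A$ precisely when $\pi$ is birational onto its image $C=\pi(\widetilde{C})$; in that case $\widetilde{C}$ has class $\widetilde{H}$ and arithmetic genus three, while $C\in|2H|$ acquires two nodes, one for each of the two orbits of the deck involution $\sigma$ inside the length-four scheme $\widetilde{C}\cap\sigma(\widetilde{C})$. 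Thus the supporting curves are exactly those of the two-nodal strata $N(2)$ and $R(1)$, and the requirement $C\in|2H|$ (rather than a translate of it) cuts the admissible classes of $\widetilde{C}$ down to two linear systems $\cong\PP^1$ on $\widetilde{A}$, which play the role of the two lines $l_1,l_2$ of the previous proof. For a suitable degree $s$ these pushforwards realise the fixed locus as an Albanese fibre of the moduli space $M_{\widetilde{A}}(0,\widetilde{H},s)$; as $(0,\widetilde{H},s)$ is primitive with square $\widetilde{H}^2=4$, this moduli space is a smooth \hk sixfold and its Albanese fibre is a K3 surface (a Kummer surface of $\widetilde{A}$), fibred over one of the two $\PP^1$'s exactly as the K3s appearing in \Ref{prop}{fixed_ta}.

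It remains to count the components, and here the cleanest route is the Fourier$-$Mukai equivalence $D^b(A)\simeq D^b(A^*)$. It sends tensorisation by $\mc{L}\in A^*[2]$ to translation by the two-torsion point $\mc{L}$ of $A^*$, and induces an isomorphism between $M(2v)$ and the analogous O'Grady moduli space attached to the dual surface $A^*$, compatibly with the Albanese maps and the symplectic resolutions. Under this isomorphism the fixed locus of $\varphi_{\mc{L}}$ is carried to the fixed locus of a translation by a two-torsion point on a manifold of $Og_6$-type built from $A^*$; by \Ref{prop}{fixed_ta} the latter is a disjoint union of $16$ K3 surfaces, which is the assertion. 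The main obstacle is to make this final step rigorous: one must check that the Fourier$-$Mukai transform descends to the singular Albanese fibres and lifts to their canonical symplectic resolutions while intertwining the two finite actions. Alternatively one may finish by a direct count, combining the two $\PP^1$-families of Kummer K3s above with the connected-component analysis of \Ref{lem}{conn_comp} carried out on $\widetilde{A}$; the delicate point there is to produce exactly the multiplicity that brings the total to $16$.
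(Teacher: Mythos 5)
Your structural idea -- that a sheaf fixed by $-\otimes\mc{L}$ descends to (i.e.\ is a pushforward from) the connected \'etale double cover $\widetilde{A}\rightarrow A$ determined by $\mc{L}$ -- is correct for stable sheaves and is genuinely different from the paper's argument. The paper instead works stratum by stratum on the singular relative Jacobian: it first shows (via the injectivity of $i^*\colon\mc{J}^0(A)\rightarrow\mc{J}^0(C)$, \Ref{lem}{injectivity}) that the action is free over all smooth curves and over $R(1)\cup R(2)$, then analyses the $D$ stratum explicitly, and finally locates the fixed locus over exactly $8$ lines of $N(2)$, obtaining $8\times 2=16$ elliptic K3 surfaces by the connected-component count of \Ref{lem}{conn_comp}. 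Two of your intermediate assertions are off: the supporting curves are \emph{not} ``exactly'' those of $N(2)$ and $R(1)$ -- over $R(1)$ there are no fixed points at all (the preimage of $H_x$ in $\widetilde{A}$ is connected, since $\mc{L}|_{H_x}$ is non-trivial, so no $\widetilde{C}$ maps birationally onto $H_x\cup H_{-x}$), whereas over the $D$ stratum there \emph{are} fixed sheaves (the rank-two sheaves concentrated on $H_x$, i.e.\ pushforwards from the $\sigma$-invariant curve $\pi^{-1}(H_x)$, which your ``birational onto its image'' normalisation excludes); these supply the $I_0^*$ fibres of the fixed K3s.

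The genuine gap, however, is that neither of your two proposed completions actually produces the number $16$, which is the content of the proposition. The Fourier--Mukai route requires that the equivalence $D^b(A)\simeq D^b(A^*)$ induce an isomorphism $M_A(0,2H,0)\cong M_{A^*}(0,2\hat{H},0)$ preserving semistability (including at the strictly semistable locus), descending to the Albanese fibres and lifting to the symplectic resolutions while intertwining $-\otimes\mc{L}$ with $t_{\mc{L}}$; none of this is available in the paper, and for the non-primitive Mukai vector $(0,2H,0)$ (where the generic transform is a shifted sheaf supported on a theta divisor) it is a substantial theorem, not a formality. The direct route fares no better: the passage from ``two $\PP^1$-families of Kummer K3s'' to ``$16$ disjoint K3 surfaces'' is exactly the multiplicity count that constitutes the hard part of the paper's proof (the identification of the $8$ distinguished lines in $N(2)$, the two connected components of the Albanese fibre over each from \Ref{lem}{conn_comp}, and the verification that no further components or isolated fixed points occur over $N(1)$ and $N(3)$). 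You explicitly flag both of these as ``the main obstacle'' and ``the delicate point'', so the proposal is an attack plan rather than a proof.
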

\begin{proof}
Again we will start by analysing the induced action on the singular Jacobian $\mc{J}^4(|2H|)$. Let $i\colon C\hookrightarrow A$ be a curve in $|2H|$. If $i_*\mc{F}$ is a sheaf in $M(2v)$ with support $C$ then the action of $\mc{L}$ maps $i_*\mc{F}$ to $i_*\mc{F}\otimes\mc{L}\cong i_*(\mc{F}\otimes i^*\mc{L}).$ We deduce that $\mc{L}$ acts fibrewise on $\pi\colon \mc{J}^4(|2H|)\rightarrow |2H|$. For any $C\in|2H|$ the sheaf $i^*\mc{L}$ is a two-torsion line bundle, which is never trivial by the next lemma:

\begin{lem}\label{lem:injectivity}
Let $i\colon C\hookrightarrow A$ be a curve in $|2H|$, then the group homomorphism $i^*\colon \mc{J}^0(A)\rightarrow \mc{J}^0(C)$ is injective.
\end{lem}
\begin{proof}
We are indebted to H.\ Ohashi for filling this gap in our considerations. It is enough to prove that for any $\mc{L}\in \mc{J}^0(A)\setminus \mc{O}_A$ its restriction $i^*\mc{L}$ has no global section. Thus we look at the exact sequence
\[ 0 \rightarrow \mc{L}(-C) \rightarrow \mc{L} \rightarrow i^*\mc{L} \rightarrow 0.\]
Since $\mc{L}$ has no cohomology it is enough to prove that $h^1(\mc{L}(-C))$ vanishes. But now $[\mc{L}(-C)]=[-C]=-2H$ is the negative of an ample class on an abelian surface, thus $\mc{L}(-C)$ has only one non-vanishing cohomology class. But its euler characteristic is four.
\end{proof}

Thus we conclude that for any smooth curve $C\in|2H|$ the action of $\mc{L}$ on the fibre $\pi^{-1}(C)$ is transitive. Furthermore we can show that there is no fixed point in the strata $R(1)$ and $R(2)$. Indeed, the Jacobian of a curve $H_x+H_{-x}$ in $R(1)\cup R(2)$ is a $\PP^1$-bundle over $\mc{J}^2(H_x)\times \mc{J}^2(H_{-x})$ and it is easy to see that the action of $\mc{L}$ on the base of this bundle is transitive.

Let us continue with the stratum $D$. As described in the last section, the compactified Jacobian of the curve $C$ (double the curve $H$) is a fibrespace over $\mc{J}^2(H)$. Recall that the fibre map is given by the determinant. Thus the induced action on $\mc{J}^2(H)$ is given by $\mc{M}\cdot \mc{L} = \mc{L}^2\otimes \mc{M}$ (for any $\mc{M}\in \mc{J}^2(H)$). Thus $\mc{L}$ acts trivially on the base. The fibre of the Jacobian over a fixed $\mc{M}\in\mc{J}^2(H)$ consists of $16+1$ $\PP^3$s. Each of the $16$ $\PP^3$s is given by extensions of the form $\Ext^1(\mc{N},\mc{N}\otimes K_H^\vee)$ (for $\mc{N}$ satisfying $\mc{N}^{\otimes2}\otimes K_H^\vee\cong \mc{M}$), thus we see that the action of $\mc{L}$ acts transitively on the set of these $16$ $\PP^3$s. We are left with studying the remaining $\PP^3$ parametrising sheaves concentrated on $H$. This $\PP^3$ is equivariantly isomorphic to $|2H|$, where the action on the latter is induced by translation by a two-torsion point. Thus we see that the fixed locus consists of two distinct lines. Now, each line meets the stratum $R(1)$ in four points. These points belong to the singular locus of $\mc{J}^4(|2H|)$ and when we pass to its resolution, they are replaced by a $\PP^1$. Altogether we see that for every $C\in D$ the fixed locus in the fibre $\pi^{-1}(C)$ consists of two distinct $I_0^*$-configurations of lines (as in the Kodaira classification).

If $C$ is a stable curve in $N(i)$ with nodes $p_1,\dots,p_i$, then the fibre $\pi^{-1}(C)$ (the compactified Jacobian) has a stratification corresponding to its partial normalisations at the nodes. Each stratum can be identified with a $(\mathbb{C}^*)^{\times n}$-bundle ($0\leq n\leq i$) over the Jacobian of the normalisation $\tilde{C}$.\\
Now, the action of $\mc{L}$ is induced via the natural action of the degree zero (non-compactified) Jacobian of $C$ (or its partial normalisations). This action is always given as the translation by a two-torsion line bundle on the base of the $(\mathbb{C}^*)^{\times n}$-bundle and by $\pm1$ on the $\mathbb{C}^*$-fibres. It is very difficult to determine whether this two-torsion line bundle is trivial or not. If it is non-trivial, we immediately deduce that there is no fixed locus. If the two-torsion line bundle is trivial, we see that the fixed locus consists of an abelian $(5-i)$-fold and possibly some $(\mathbb{C}^*)^{\times n}$-bundles, depending on the action on the $\mathbb{C}^*$-fibres. 

\begin{lem}
Every element of $A^*[2]$ acts transitively or trivially on the set of connected components of the Albanese fibre of the Jacobian of a curve in $N(i)$.
\begin{proof}
An element of $A^*[2]$ acts either as a non-trivial translation on the abelian part of the compactified Jacobian or purely on the non-abelian part. In the first case, there are no fixed points for the action and it is therefore transitive on the connected components of the Albanese fibre, while in the second case it acts trivially on the base an therefore also trivially on the connected components of the Albanese fibre. Note that the number of connected components was computed in \Ref{lem}{conn_comp}. 
\end{proof}
\end{lem}

Let us use the detailed description of the $D$ stratum to proceed. The fixed locus in the fibre of a curve in $D$ has dimension three. Since, at the end, the fixed locus of the resolution of the Albanese fibre $\mc{K}^4(|2H|)$ has to be smooth symplectic, we can firstly deduce that there is no curve in $C\in N(1)$ with trivial action on the Jacobian of its resolution: Indeed, this would yield a two-dimensional family of fixed abelian surfaces which cannot degenerate to the $I_0^*$ in the $D$ stratum. We continue with curves in the $N(2)$ stratum. Since the Albanese fibre of the Jacobian of such a curve has two connected components, we see that for each point $a\in D$, there must be precisely one line in $N(2)$ passing through $a$ (and another point $a'\in D$) containing curves such that the pullback of $\mc{L}$ to the normalisation is trivial. Thus there are exactly $8$ such lines. Let us denote one of them by $l$. The corresponding action on the two $(\mathbb{C}^*)^2$ has to be given by $(-1,-1)$. Thus this part of the fixed locus consists of two disjoint elliptic K3 surfaces, each with two $I_0^*$-fibres. Furthermore it has six $I_2$-fibres, which correspond to the intersections of $l$ with the stratum $N(3)$. Indeed, there are six such intersection points and the Albanese fibre of the corresponding (non-compactified) Jacobian has four connected components each of which is a $\mathbb{C}^*$. In the Compactification they are glued to two $I_2$-fibres. Note that on an intersection point of $l$ with $N(3)$ the action on the $(\mathbb{C}^*)^3$-fibres is given by $(-1,-1,+1)$. To conclude the proof, the following computation suffices:

\begin{lem}
Let $\Gamma$ be a connected component of the fixed locus of $A^*[2]$ over $N(1)$ (respectively, $N(2)$ or $N(3)$). Then it is fixed by $1$ (resp. $2$, $4$) elements of $A^*[2]$.
\begin{proof}
We already proved that only the identity has a fixed locus over $N(1)$. For what concerns $N(2)$, we proved that every non-trivial element of $A^*[2]$ fixes exactly $16$ components over $8$ lines in $N(2)$. Since there are $15$ non-trivial elements in $A^*[2]$ and $120$ lines, to conclude we only need to prove that every line supports fixed points of at most one non-trivial element. Indeed, as we have seen in \Ref{lem}{conn_comp}, the Albanese fibre of the normalisation $J(\widetilde{C})$ has two connected components each of which is isomorphic to the image $Y$ of the Jacobian $J(\widetilde{f(C)})$, which is an elliptic curve. The action of $A^*[2]$ acts thus by a subgroup of order $8$ on this constellation, leaving an order $2$ stabiliser, which consists of the identity and the unique non-trivial element.\\
In the $N(3)$-case the Albanese fibre consists of $4$ points and we thus have a stabiliser of order $4$. 
\end{proof}
\end{lem}

The above lemma tells us, in particular, that we have exactly $48=\frac{240}{15/3}$
elements of $N(3)$ which have a nonempty fixed locus for a given element of $A^*[2]$. (Here $240$ is the number of points in $N(3)$, $15$ is the number of all non-trivial involutions and $3$ is the number of involutions acting non-trivially on the fibre of one point in $N(3)$.) These are precisely the $48$ points of $N(3)$ lying in the $8$ fixed lines of $N(2)$, therefore we have no isolated fixed points.
\end{proof}



Finally, let consider the remaining 'mixed' automorphisms.

\begin{prop} \label{prop:fixed_mixed}
Let $\varphi$ be an element of $(A\times A^*)[2]\setminus(A[2]\cup A^*[2])$. Then the fixed point locus of its action on $X$ consists either of $16$ fixed points ($180$ cases) or of $2$ fixed $K3$ surfaces ($45$ cases).
\begin{proof}
Let $\varphi$ be $t_a\circ \mc{L}$ with $a\in A[2]$ and $\mc{L}\in A^*[2]$. The fixed locus of $\varphi$ is given by points $x$ such that $t_a(x)=\mc{L}(x)$. As $t_a$ acts on the linear system and $\mc{L}$ acts on the Jacobians, this is only possible over the two lines of $\mathbb{P}^3$ fixed by $t_a$. These lines are the intersection with $|2H|$ of the pullback of $\{P\}$ from $A/\langle a\rangle$, where $P$ is the $(1,2)$-polarisation on $A/\langle a\rangle$. Let $C$ be a smooth curve corresponding to a point on one of these two lines and let $C/a$ be the quotient curve in $A/\langle a\rangle$. Let $X:=\mc{J}^4(C)$, let $Y\subset X$ be the image of $\mc{J}^2(C/a)$ under the pullback of the covering map, and let $Z$ be the complementary abelian surface inside $X$. Via pullpack along the embedding of $C$, $A^*$ is embedded into $Y$. Thus, $\mc{L}$ acts on $X$ through a two-torsion point $y_0\in Y$. The involution $t_a$ acts as the identity on $Y$ and as $-1$ on $Z$. We have an exact sequence
\[0\rightarrow Z[2]\rightarrow Z\times Y\rightarrow X\rightarrow 0.\]
Our fixed locus is given by pairs $(z,y)$ such that $(-z,y)-(z,y+y_0)$ lies in $Z[2]$. If $y_0\notin Z[2]$, this has no solution, otherwise there is a threefold isomorphic to $Y$ which is fixed by $\varphi$ inside $X$, and the Albanese fibre is an elliptic curve in this threefold. (This fibre is connected by part a) of the lemma below.) In particular, we have a group homomorphism $A^*[2]\rightarrow Y[2]/Z[2]$ induced by $C\subset A$ and the above exact sequence. By part b) of the Lemma below, this map is surjective. Hence, there are three non-trivial elements of $A^*[2]$ lying in $Z[2]$. The fixed elliptic curve inside $X$ deforms with $C$, giving an elliptic $K3$ surface fixed by $\varphi$ over both lines in $|2H|$ fixed by $t_a$. On the other hand, if $\mc{L}$ does not map inside $Z[2]$, there is no fixed locus over all smooth curves fixed by $t_a$ and, with an analogous argument, this holds also for curves lying in $N(2)$. The only case left is given by curves lying in $R(1)$. For such a curve the Jacobian is a $\mathbb{P}^1$-bundle over the product of the Jacobians of the two irreducible components of the curve. The map $t_a$ acts by exchanging the two factors and $\mc{L}$ acts transitively on each factor of the base. Hence, the set $(x,\mc{L}(t_a(x)))$ parametrises a surface of fibres fixed by $\varphi$, and in every fibre we have two fixed points. After taking the Albanese fibre, we are left with two fixed points for every $t_a$-fixed curve in $R(1)$, which sums to $16$.
\end{proof} 
\end{prop}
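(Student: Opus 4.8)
The plan is to reduce, as in the proof of \Ref{prop}{fixed_ta}, to the explicit model in which $X$ is the resolution of the Albanese fibre $\mc{K}^4(|2H|)$ (the fixed locus being a deformation invariant), and to write $\varphi=t_a\circ\mc{L}$ with $a\in A[2]\setminus\{0\}$ and $\mc{L}\in A^*[2]\setminus\{0\}$. Because $\mc{L}$ acts fibrewise on $\pi\colon\mc{J}^4(|2H|)\to|2H|$ while $t_a$ acts on the base as a non-trivial Kummer translation, any fixed point of $\varphi$ must lie over the fixed locus of $t_a$ on $|2H|$, which by (the proof of) \Ref{prop}{fixed_ta} is the union of two lines $l_1,l_2$. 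These meet only the strata $S$, $N(2)$ and $R(1)$, so the whole analysis is confined to $\pi^{-1}(l_1\cup l_2)$ and splits according to these three strata.

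Over a smooth curve $C$ corresponding to a point of $l_i$ I would use the Prym decomposition of the \'etale double cover $C\to C/\langle a\rangle$ (genus five over genus three). Setting $X_C:=\mc{J}^4(C)$, let $Y\subset X_C$ be the image of the pullback of $\mc{J}^2(C/\langle a\rangle)$, an abelian threefold, and let $Z$ be its complementary abelian surface, so that there is an isogeny
\[0\to Z[2]\to Z\times Y\to X_C\to 0.\]
The crucial structural facts are that $t_a$ acts as $+\id$ on $Y$ and as $-\id$ on $Z$, and that $\mc{L}$ acts as translation by the point $y_0:=i^*\mc{L}$, which lies in $Y[2]$ since $\mc{L}$ descends to $A/\langle a\rangle$. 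Writing a point of $X_C$ as a class $(z,y)$, the condition $\varphi(z,y)=(z,y)$ becomes $(-z,y+y_0)\equiv(z,y)\bmod Z[2]$. A short computation then shows that this is solvable exactly when $y_0\in Z[2]$, in which case the fixed locus is a single translate of the threefold $Y$, and is empty otherwise.

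The resulting dichotomy is governed by the group homomorphism $A^*[2]\to Y[2]/Z[2]$, $\mc{L}\mapsto y_0$, and I would isolate as an auxiliary lemma the two facts that (a) the Albanese fibre of the fixed threefold $Y$ is connected, hence an elliptic curve, and (b) this homomorphism is surjective. Granting (b), since $|A^*[2]|=16$ and $|Y[2]/Z[2]|=4$ its kernel has order four, so exactly three non-trivial $\mc{L}$ give $y_0\in Z[2]$; for each such $\mc{L}$ the fixed threefolds vary in a one-parameter family over each line $l_i$ whose Albanese fibres (connected elliptic curves by (a)) sweep out an elliptic surface, and a check of its numerical invariants identifies it as a K3 surface. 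This yields two K3 surfaces and the count $15\cdot 3=45$. For the remaining twelve non-trivial $\mc{L}$ one has $y_0\notin Z[2]$, so $\varphi$ has no fixed point over $S$ and, by the same computation, none over $N(2)$; the only contribution is over $R(1)$, where the fibre is a $\PP^1$-bundle over $\mc{J}^2(H_x)\times\mc{J}^2(H_{-x})$, on which $t_a$ exchanges the two factors while $\mc{L}$ acts transitively on each, leaving two fixed points in each of the eight $t_a$-fixed fibres. Passing to the Albanese fibre produces $16$ isolated fixed points and the count $15\cdot 12=180$.

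The main obstacle I expect is the auxiliary lemma, in particular part (b): proving surjectivity of $A^*[2]\to Y[2]/Z[2]$ demands a careful analysis of the two-torsion in the Prym decomposition and of its interaction with the restriction map $i^*\colon A^*\to\mc{J}^0(C)$, whose injectivity is \Ref{lem}{injectivity}. It is precisely this surjectivity that pins the number of fixed-point-free directions to three, and hence the split between the $45$ and the $180$ cases. A secondary difficulty is the bookkeeping in the $R(1)$ stratum and in the passage to the resolution and the Albanese fibre: one must verify that the surface of fixed fibres contributes exactly two points per $t_a$-fixed curve of $R(1)$, while confirming via \Ref{lem}{conn_comp} that the connected-component count introduces no spurious fixed loci on the boundary strata.
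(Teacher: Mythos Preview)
Your proposal is correct and follows the paper's proof essentially step for step: the reduction to the two $t_a$-fixed lines in $|2H|$, the Prym decomposition $0\to Z[2]\to Z\times Y\to X_C\to 0$ with $t_a=(+\id,-\id)$ and $\mc{L}$ acting by translation by $y_0\in Y[2]$, the dichotomy according to whether $y_0\in Z[2]$, the auxiliary lemma with parts (a) connectedness and (b) surjectivity of $A^*[2]\to Y[2]/Z[2]$, and the $R(1)$ analysis giving the $16$ isolated points are all exactly what the paper does. Your identification of part (b) of the lemma as the main technical obstacle is also on the mark; the paper proves it by a diagram chase showing $|A\cap Z|=4$.
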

\begin{lem}
Keep notations as in the above proposition.
\begin{enumerate}[a)]
\item If we have a fixed threefold $Y$, the Albanese fibre of $Y$ is connected.
\item The map $A[2]\rightarrow Y[2]/Z[2]$ is surjective.
\end{enumerate}

\begin{proof}
Part a): We need to analyse the number of connected components of the kernel of the map $Y\rightarrow A$. Let us call this kernel $D$. The threefold $Y$ admits a two-to-one covering by the Jacobian $\mc{J}^2(C/t_a)$ of the quotient curve $C/t_a$. We pull back along this covering to obtain the following diagram:
\[\xymatrix{
\Z/2\Z \ar[r]^{\cong} \ar@{^(->}[d] & \Z/2\Z \ar@{^(->}[d] \\
\widetilde{D} \ar@{^(->}[r]  \ar@{->>}[d]^{2:1} & \mc{J}^2(C/t_a) \ar@{->>}[r] \ar@{->>}[d]^{2:1} & A \ar[d]^{\cong}\\
D \ar@{^(->}[r] & Y  \ar@{->>}[r] & A.
}\]

Thus we see that it is enough to show that the double cover $\widetilde{D}$ is connected. This is true because the polarisations of both $\mc{J}^2(C/t_a)$ and $A$ are principal and thus $A$ and $\widetilde{D}$ have exponent $1$ as subvarieties of $\mc{J}^2(C/t_a)$.

Part b): If the curve $C$ is general, all maps $A\rightarrow Y$ are obtained from the inclusion map $A\hookrightarrow Y$ by composing with a multiplication map. Consider the following diagram:
\[\xymatrix{
Z \ar@{^{(}->}[r] & X \ar@{->>}[r] & X/Z \ar@{->>}[r]^{4:1} & Y\\
A\cap Z \ar@{^{(}->}[r] \ar@{^{(}->}[u] & A \ar[r] \ar@{^{(}->}[u] & A/(A\cap Z). \ar@{^{(}->}[u]
}\]
Here the first line is the dual of the exact sequence $Y\rightarrow X\rightarrow Z$ and the last map is four to one. The bottom line is exact and we have a composition map $$ A\rightarrow A/(A\cap Z) \rightarrow X/Z \rightarrow Y$$ which must be the composition of the inclusion $A\rightarrow Y$ with the multiplication by two (here we use that $Y\cap Z$ consists only of two torsion points). This means that the map $A\rightarrow A/A\cap Z$ has degree four, so $A\cap Z$ consists of four elements. The group homomorphism $A[2]\rightarrow Y[2]/Z[2]$ sends only the four elements of $A\cap Z$ to zero, hence it is surjective.
\end{proof}
\end{lem}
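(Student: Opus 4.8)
The final lemma has two parts concerning the fixed-locus analysis: (a) if $\varphi$ fixes a threefold $Y$ inside the Jacobian, then the Albanese fibre of $Y$ is connected; (b) the map $A[2]\rightarrow Y[2]/Z[2]$ induced by the inclusion $C\subset A$ and the exact sequence $0\rightarrow Z[2]\rightarrow Z\times Y\rightarrow X\rightarrow 0$ is surjective. Let me sketch how I would attack each.

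Let me think through part (a) first. The goal is to count connected components of the Albanese fibre of $Y$, which amounts to counting components of the kernel $D$ of the restricted Albanese map $Y\rightarrow A$.

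The plan for part (a) is to reduce the connectedness question to an exponent computation on a principally polarised abelian variety. The threefold $Y$ arises as the image of $\mc{J}^2(C/t_a)$ under pullback, so I would first pass to the double cover: set up the two-to-one covering $\mc{J}^2(C/t_a)\rightarrow Y$ and pull back the kernel $D$ to a subvariety $\widetilde{D}\subset\mc{J}^2(C/t_a)$. The key commutative diagram (already drawn in the excerpt) shows $\widetilde{D}$ sitting inside $\mc{J}^2(C/t_a)$ alongside a copy of $A$, with $\Z/2\Z$ measuring the discrepancy between $\widetilde D$ and $D$. Connectedness of the Albanese fibre of $Y$ then follows once I show $\widetilde D$ is connected. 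The clean way to see this is via the theory of complementary abelian subvarieties in a principally polarised abelian variety (Birkenhake--Lange, Ch.~12): both $A$ and $\widetilde D$ are the images of the principally polarised $A$ and the principally polarised quotient, so each has \emph{exponent one} as an abelian subvariety of $\mc{J}^2(C/t_a)$. An abelian subvariety of exponent one is a connected kernel component, hence $\widetilde D$ is connected, and descending through the diagram gives connectedness of the Albanese fibre of $Y$. I expect the only delicate point here is checking that the principal polarisations really do force exponent one, which is exactly the content of the Birkenhake--Lange results already cited for \Ref{lem}{conn_comp}.

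The plan for part (b) is more arithmetic: I want to pin down the image of $A[2]$ in $Y[2]/Z[2]$ by computing the index $[A:A\cap Z]$, since the kernel of $A[2]\rightarrow Y[2]/Z[2]$ consists precisely of the two-torsion points lying in $Z$, i.e.\ of $A\cap Z\subset A[2]$. The strategy is to use the dual of the complementary decomposition $Y\hookrightarrow X\twoheadrightarrow Z$: dualising gives $Z\hookrightarrow X\twoheadrightarrow X/Z$ with a natural four-to-one isogeny $X/Z\rightarrow Y$, and I would trace the composite $A\rightarrow A/(A\cap Z)\rightarrow X/Z\rightarrow Y$ through the second commutative diagram of the excerpt. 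The punchline is that this composite must coincide with the inclusion $A\hookrightarrow Y$ followed by multiplication by $2$ (here one uses that $Y\cap Z$ is contained in the two-torsion, so the complementary idempotent acts by $2$ on $A$), and since multiplication by $2$ on $A$ has degree $16$ while $A\rightarrow A/(A\cap Z)\rightarrow X/Z$ factors through an isogeny of known degree, comparing degrees forces $A\rightarrow A/(A\cap Z)$ to have degree $4$, i.e.\ $|A\cap Z|=4$. Consequently $A[2]\rightarrow Y[2]/Z[2]$ has kernel of order exactly $4$; since $|A[2]|=16$ and the target $Y[2]/Z[2]$ has order $4$, the map is surjective.

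The main obstacle I anticipate is the degree bookkeeping in part (b): one must correctly identify which isogeny is four-to-one and verify that the composite genuinely equals $[2]$ on $A$ rather than some other endomorphism, and this rests on the assertion that $Y\cap Z$ lies in the two-torsion. That fact, in turn, comes from the exponent-one property established (in the principally polarised setting) in the proof of part (a) and in \Ref{lem}{conn_comp}; so the two parts are logically intertwined, and the cleanest exposition proves (a) first and then feeds the exponent information into the degree count for (b). I would guard against an off-by-a-factor error by double-checking the count against the independently derived fact from the proposition that exactly three non-trivial elements of $A^*[2]$ land in $Z[2]$, which is equivalent to the kernel $A\cap Z$ of the analogous map having order $4$.
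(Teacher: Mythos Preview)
Your proposal is correct and follows essentially the same approach as the paper: for part (a) you pass to the double cover $\mc{J}^2(C/t_a)\rightarrow Y$, reduce to connectedness of $\widetilde{D}$, and invoke the exponent-one property coming from the principal polarisations (exactly as in the paper, via Birkenhake--Lange); for part (b) you dualise the complementary decomposition, identify the composite $A\rightarrow A/(A\cap Z)\rightarrow X/Z\rightarrow Y$ with the inclusion followed by multiplication by $2$, and read off $|A\cap Z|=4$ by degree comparison, which is precisely the paper's argument. Your anticipated obstacles (verifying $Y\cap Z\subset$ two-torsion, and the degree bookkeeping) are the same points the paper handles, and your consistency check against the count of three non-trivial elements landing in $Z[2]$ is a nice sanity check that the paper does not make explicit.
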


\begin{oss}
The above computations of the fixed locus tells us also that the action of these automorphisms on the full cohomology group is non trivial. Indeed, the equivariant cohomology is the cohomology of the quotient manifold, which has topological Euler characteristic different from $Og_6$.
\end{oss}

\end{document}